\documentclass{article}
\usepackage{amsmath,amssymb,amscd,amsthm,verbatim,alltt,amsfonts,array}
\usepackage[english]{babel}
\usepackage{latexsym}
\usepackage{amssymb}
\usepackage{euscript}
\usepackage{graphicx}

\newtheorem{theorem}{Theorem}
\theoremstyle{plain}

\newtheorem{corollary}{Corollary}

\newtheorem{definition}{Definition}

\newtheorem{lemma}{Lemma}

\newtheorem{remark}{Remark}

\numberwithin{equation}{section}

\begin{document}

\title{Montel's Theorem and subspaces of distributions which are $\Delta^m$-invariant }

\author{J. M. Almira}

\date{}

\markboth{J. M. Almira }{ Montel's Theorem and  subspaces of distributions which are $\Delta^m$-invariant}

\maketitle 

\begin{abstract}
We study the finite dimensional spaces $V$ which are invariant under the action of the finite differences operator $\Delta_h^m$. Concretely, we prove that if $V$ is such an space, there exists a finite dimensional translation invariant space $W$ such that $V\subseteq W$. In particular, all elements of $V$ are exponential polynomials. Furthermore, $V$ admits a decomposition $V=P\oplus E$ with $P$ a space of polynomials and $E$ a translation invariant space.  As a consequence of this study, we prove a generalization of a famous result by P. Montel  \cite{montel} which states that, if $f:\mathbb{R}\to \mathbb{C}$ is a continuous function satisfying $\Delta_{h_1}^mf(t) = \Delta_{h_2}^mf(t)=0$ for all $t\in\mathbb{R}$ and certain $h_1,h_2\in\mathbb{R}\setminus\{0\}$ such that $h_1/h_2\not\in\mathbb{Q}$, then $f(t)=a_0+a_1t+\cdots+a_{m-1}t^{m-1}$ for all $t\in\mathbb{R}$ and certain complex numbers $a_0,a_1,\cdots,a_{m-1}$. We demonstrate, with quite different arguments, the same result not only for ordinary functions $f(t)$ but also for complex valued distributions. Finally,  we also consider in this paper the subspaces $V$  which are $\Delta_{h_1h_2\cdots h_m}$-invariant for all $h_1,\cdots,h_m\in\mathbb{R}$. 
\end{abstract}

\begin{quotation}
\noindent{\bf Key Words}: {Invariant subspaces, Montel's Theorem, Exponential Polynomials}

\noindent{\bf 2010 Mathematics Subject Classification}:  47A15, 46F05, 46F10, 39B22, 39B32\\
\end{quotation}


\thispagestyle{empty}

\section{Motivation}
Let $X$ denote either the space of continuous functions $f:\mathbb{R}\to \mathbb{C}$ or the space of complex valued Schwartz distributions. A well known result by P. M. Anselone and J. Korevaar \cite{anselone} characterizes the finite dimensional subspaces of $X$ which are translation invariant as the spaces of solutions of the homogeneous linear differential equations with constant coefficients $x^{(n)}+a_1x^{(n-1)}+\dots+a_{n-1}x'+a_nx=0$ (here $x:\mathbb{R}\to \mathbb{C}$ and $a_1,\cdots,a_n\in \mathbb{C}$ for some $n\in\mathbb{N}$) (see also \cite{leland}, where a simpler proof of this result is given for spaces of continuous functions). These spaces are generated by a set of monomials of the form 
\begin{equation}\label{base}
t^{k-1}e^{\lambda t}, \ k=1,\cdots,m(\lambda) \text{ and } \lambda\in \{\lambda_0,\lambda_1,\cdots,\lambda_s\}\subset \mathbb{C},
\end{equation} 
so that their elements are exponential polynomials. We assume, by convention, that $\lambda_0=0$ and that $m(\lambda_0)=0$ means that this set does not contain elements of the form $t^{k}$ with $k\in \mathbb{N}$.  Recall that a subspace $V$ of $X$ is translation invariant if for all $h\in\mathbb{R}$ we have that $\tau_{h}(V)\subseteq V$, where $\tau_h(f)(t)=f(t+h)$ if $f$ is an ordinary function and $\tau_{h}(f)\{\phi\}=f\{\tau_{-h}(\phi)\}$ if $f$ is a distribution and $\phi$ is a test function. In their paper \cite{anselone} Anselone and Korevaar proved that if $V$ is a finite dimensional subspace of $X$ and $\tau_{h_1}(V)\subseteq V$, $\tau_{h_2}(V)\subseteq V$ for certain non-zero real numbers $h_1,h_2$ such that  $h_1/h_2\not\in\mathbb{Q}$, then $V$ is translation invariant and hence it admits an algebraic basis of the form \eqref{base}.  They also proved that, if $V$ is a finite dimensional subspace of the space of continuous complex valued functions defined on the semi-infinite interval $(0,\infty)$ and $\tau_{h_k}(V)\subseteq V$  for an infinite sequence of positive real numbers $\{h_k\}_{k=1}^\infty$ which converges to zero, then $V$ admits an algebraic basis of the form \eqref{base}.  

It is evident that, if we denote by $1_d=\tau_0$ the identity operator, and we define the first difference operator $\Delta_h=\tau_h-1_d$, then $\tau_h(V)\subseteq V$ if and only if $\Delta_h(V)\subseteq V$ (we say that $V$ is $\Delta_h$-invariant), so that the results in \cite{anselone} can be directly stated for finite dimensional spaces invariant by the operators $\Delta_h$ .   The main goal of this paper is to study the finite dimensional spaces $V$ which are invariant under the action of the finite differences operators $\Delta_h^m$, which are defined inductively by $\Delta_h^1=\Delta_h$ and $\Delta_h^{k+1}f=\Delta_h(\Delta_h^kf)$, $k=1,2,\cdots$. Concretely, we prove that if $V$ is such an space, there exists a finite dimensional translation invariant space $W$ such that $V\subseteq W$. In particular, all elements of $V$ are exponential polynomials. Furthermore, $V$ admits a decomposition $V=P\oplus E$ with $P$ a space of polynomials and $E$ a translation invariant space.  As a consequence of this study, we prove a generalization of a famous result by P. Montel  \cite{montel} which states that, if $f:\mathbb{R}\to \mathbb{C}$ is a continuous function satisfying $\Delta_{h_1}^mf(t) = \Delta_{h_2}^mf(t)=0$ for all $t\in\mathbb{R}$ and certain $h_1,h_2\in\mathbb{R}\setminus\{0\}$ such that $h_1/h_2\not\in\mathbb{Q}$, then $f(t)=a_0+a_1t+\cdots+a_{m-1}t^{m-1}$ for all $t\in\mathbb{R}$ and certain complex numbers $a_0,a_1,\cdots,a_{m-1}$. We demonstrate, with quite different arguments, the same result not only for ordinary functions $f(t)$ but also for complex valued distributions.  We devote the last section of this paper  to consider the subspaces $V$ of $X$ which are $\Delta_{h_1h_2\cdots h_m}$-invariant for all $h_1,\cdots,h_m\in\mathbb{R}$. Here $\Delta_{h_1h_2\cdots h_m}$ denotes the finite differences operator defined inductively by $\Delta_{h_1h_2\cdots h_m}f(t)=\Delta_{h_1}(\Delta_{h_2\cdots h_m}f)(t)$. Obviously, these operators generalize the operators $\Delta_h^m$, which are got when we impose the restriction $h_1=\cdots=h_m=h$. 

\section{$\Delta^m$-invariant subspaces and Montel's Theorem for distributions}

\begin{theorem}\label{VdentroW} Assume that $V$ is a finite dimensional subspace of $X$ and $\Delta_{h_1}^m(V)\subseteq V$, $\Delta_{h_2}^m(V)\subseteq V$ for certain non-zero real numbers $h_1,h_2$ such that  $h_1/h_2\not\in\mathbb{Q}$. Then there exists a
finite dimensional subspace $W$ of  $X$ which is invariant by translations and contains $V$. Consequently, all elements of $V$ are exponential polynomials. \end{theorem}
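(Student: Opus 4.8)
My plan is to enlarge $V$, in two successive steps, to a finite dimensional space that is invariant under both translations $\tau_{h_1}$ and $\tau_{h_2}$, and then to invoke the two-translation case of the Anselone--Korevaar theorem recalled in the Motivation to conclude that this larger space (hence $V$) consists of exponential polynomials.

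The engine of the argument is the following elementary observation, which I would isolate first. If $Y$ is a finite dimensional subspace of $X$ with $\Delta_h^m(Y)\subseteq Y$, put $\widehat Y := Y+\Delta_h(Y)+\cdots+\Delta_h^{m-1}(Y)$. Then $\widehat Y$ is finite dimensional (of dimension $\le m\dim Y$), contains $Y$, and is $\Delta_h$-invariant: applying $\Delta_h$ sends each of the first $m-1$ summands into $\widehat Y$ by construction, while it sends the last summand into $\Delta_h^m(Y)\subseteq Y\subseteq\widehat Y$ by hypothesis. Since $\tau_h=1_d+\Delta_h$, the space $\widehat Y$ is then also $\tau_h$-invariant. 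Two further facts I would record, each proved by the same one-line commutation computation: if an operator $S$ commutes with $\tau_h$ (hence with $\Delta_h$) and satisfies $S(Y)\subseteq Y$, then $S(\widehat Y)=\sum_{i=0}^{m-1}\Delta_h^i(S(Y))\subseteq\widehat Y$; and if instead $S$ commutes with $\tau_h$ and $S^m(Y)\subseteq Y$, then likewise $S^m(\widehat Y)\subseteq\widehat Y$.

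With this in hand the proof is short. First I would apply the observation with $h=h_1$, $Y=V$, producing $U:=\widehat V$, which is finite dimensional, contains $V$, and is $\tau_{h_1}$-invariant; moreover, since $\Delta_{h_2}^m$ commutes with $\tau_{h_1}$ and $\Delta_{h_2}^m(V)\subseteq V$, the second recorded fact gives $\Delta_{h_2}^m(U)\subseteq U$. Then I would apply the observation again with $h=h_2$, $Y=U$, producing $W:=\widehat U$, which is finite dimensional, contains $U\supseteq V$, and is $\tau_{h_2}$-invariant; and since $\tau_{h_1}$ commutes with $\Delta_{h_2}$ and $\tau_{h_1}(U)\subseteq U$, the first recorded fact gives $\tau_{h_1}(W)\subseteq W$. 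Thus $W$ is a finite dimensional subspace of $X$, invariant under both $\tau_{h_1}$ and $\tau_{h_2}$ with $h_1/h_2\notin\mathbb{Q}$; by the Anselone--Korevaar theorem \cite{anselone} it is translation invariant, hence admits a basis of the form \eqref{base}, and therefore every element of $W\supseteq V$ is an exponential polynomial. This $W$ is the space asked for in the statement.

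The step I expect to require the most care is the second enlargement: one must check that passing from $U$ to $W$ in order to gain $\tau_{h_2}$-invariance does not destroy the $\tau_{h_1}$-invariance obtained in the first step, and that $U$ still carries enough $\Delta_{h_2}^m$-invariance for the construction to apply a second time. Both points hold for the soft reason that every operator in sight is a translation, or a polynomial in a single translation, so that all of them commute; beyond that bookkeeping, the genuine mathematical content is imported wholesale from \cite{anselone}.
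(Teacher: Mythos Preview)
Your proof is correct and follows essentially the same two-step enlargement strategy as the paper: first form $\Box_{\Delta_{h_1}}(V)$, then $\Box_{\Delta_{h_2}}(\Box_{\Delta_{h_1}}(V))$, using the commutativity of the $\Delta_{h_i}$ to propagate invariance through each enlargement, and finally invoke Anselone--Korevaar. The only cosmetic difference is that you sum up to $\Delta_h^{m-1}$ rather than $\Delta_h^m$ (harmless, since $\Delta_h^m(Y)\subseteq Y$ makes the top term redundant), which yields the slightly sharper bound $\dim W\le m^2\dim V$ in place of the paper's $(m+1)^2\dim V$.
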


\begin{lemma} \label{uno} Let $E$ be a vector space and $L:E\to E$ be a linear operator defined on $E$. If $V\subset E$ is an $L^m$-invariant subspace of $E$, then the space
\[
\Box_L^m(V)=V+L(V)+L^2(V)+\cdots+L^m(V)
\] 
is $L$-invariant. Furthermore, $\Box_L^m(V)$ is the smallest $L$-invariant subspace of $E$ containing $V$.
\end{lemma}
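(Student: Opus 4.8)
The plan is to verify both assertions directly from the definitions; the argument is purely formal, and the only mildly delicate point is the ``wrap-around'' inclusion $L^{m+1}(V)\subseteq L(V)$, which is where the $L^m$-invariance of $V$ enters. First I would note that $\Box_L^m(V)$ is genuinely a subspace of $E$: each $L^k(V)$ is a subspace because $L$ is linear and $V$ is a subspace, and a finite sum of subspaces is again a subspace; moreover $\Box_L^m(V)$ contains $V$, which is the $k=0$ summand.

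Next, to prove $L$-invariance, I would simply compute
\[
L\bigl(\Box_L^m(V)\bigr)=L(V)+L^2(V)+\cdots+L^m(V)+L^{m+1}(V).
\]
The first $m$ terms on the right-hand side are among the summands defining $\Box_L^m(V)$, hence lie in it. For the last term, the hypothesis $L^m(V)\subseteq V$ gives $L^{m+1}(V)=L\bigl(L^m(V)\bigr)\subseteq L(V)\subseteq\Box_L^m(V)$. Therefore $L\bigl(\Box_L^m(V)\bigr)\subseteq\Box_L^m(V)$, which is exactly the claimed invariance.

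Finally, for the minimality statement, I would let $W$ be any $L$-invariant subspace of $E$ with $V\subseteq W$ and show by induction on $k$ that $L^k(V)\subseteq W$ for all $k\ge 0$: the case $k=0$ is the hypothesis, and if $L^k(V)\subseteq W$ then $L^{k+1}(V)=L\bigl(L^k(V)\bigr)\subseteq L(W)\subseteq W$. Summing over $k=0,\dots,m$ gives $\Box_L^m(V)\subseteq W$. Combined with the previous two paragraphs, which show that $\Box_L^m(V)$ is itself an $L$-invariant subspace containing $V$, this proves that $\Box_L^m(V)$ is the smallest such subspace. I do not expect any real obstacle here; the single point requiring attention is that the $L^m$-invariance of $V$ is invoked precisely once, to fold $L^{m+1}(V)$ back into $L(V)$.
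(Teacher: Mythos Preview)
Your proof is correct and is precisely the straightforward verification the paper has in mind; in fact the paper omits the argument entirely, declaring the result ``trivial'' and leaving it as ``an easy exercise.'' There is nothing to add or amend.
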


\begin{proof} This result is trivial. Its proof is an easy exercise.


\end{proof}

\begin{lemma} \label{dos} Let $E$ be a vector space and $L,S:E\to E$ be two linear operators defined on $E$. Assume that $LS=SL$.  If $V\subset E$ is a vector subspace of $E$ which satisfies  $L^m(V)\cup S^m(V)\subseteq V$, then 
\[
S^m(\Box_L^m(V)) \subseteq \Box_L^m(V).
\] 
Consequently, the space
\[
\diamond_{L,S}^m(V)= \Box_S^m(\Box_L^m(V)) 
\]
is $L$-invariant, $S$-invariant, and contains $V$. 
\end{lemma}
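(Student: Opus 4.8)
The plan is to reduce everything to Lemma \ref{uno} together with the commutation hypothesis $LS=SL$. First I would expand
$S^m(\Box_L^m(V))=\sum_{j=0}^m S^m(L^j(V))$.
Since $L$ and $S$ commute, so do all of their powers and composites; in particular $S^m L^j = L^j S^m$ for every $j=0,1,\dots,m$, hence $S^m(L^j(V)) = L^j(S^m(V))$. The hypothesis $S^m(V)\subseteq V$ then gives $S^m(L^j(V)) = L^j(S^m(V)) \subseteq L^j(V) \subseteq \Box_L^m(V)$. Summing over $j$ yields $S^m(\Box_L^m(V))\subseteq \Box_L^m(V)$, which is the first assertion of the lemma.

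For the ``consequently'' part, I would first invoke Lemma \ref{uno} applied to $L$ (legitimate because $L^m(V)\subseteq V$): the space $W_0:=\Box_L^m(V)$ is $L$-invariant and contains $V$. We have just shown that $S^m(W_0)\subseteq W_0$, so Lemma \ref{uno} applied this time to the operator $S$ and the subspace $W_0$ shows that $\diamond_{L,S}^m(V)=\Box_S^m(W_0)$ is $S$-invariant and contains $W_0\supseteq V$. It remains to verify $L$-invariance of $\Box_S^m(W_0)=\sum_{j=0}^m S^j(W_0)$. Applying $L$ and using $LS=SL$ once more, $L(S^j(W_0)) = S^j(L(W_0)) \subseteq S^j(W_0)$ since $W_0$ is $L$-invariant; summing over $j$ gives $L(\Box_S^m(W_0))\subseteq \Box_S^m(W_0)$, as desired.

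The argument is entirely formal and I do not anticipate any genuine obstacle; the only point that requires a little care is that commutativity of $L$ and $S$ must be propagated to all the composite operators that actually appear (namely $S^m L^j$ and $L S^j$), which is immediate by induction, and that the word ``invariant'' be tracked consistently: one assumes only $L^m$- and $S^m$-invariance of the input $V$, but the output $\diamond_{L,S}^m(V)$ is fully $L$- and $S$-invariant. I would also remark, in parallel with Lemma \ref{uno}, that $\diamond_{L,S}^m(V)$ is in fact the smallest subspace of $E$ that is simultaneously $L$-invariant and $S$-invariant and contains $V$, although this minimality will not be needed in the sequel.
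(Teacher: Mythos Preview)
Your argument is correct and follows essentially the same route as the paper's own proof: expand $\Box_L^m(V)$, use $S^mL^j=L^jS^m$ together with $S^m(V)\subseteq V$ to get $S^m$-invariance of $\Box_L^m(V)$, then invoke Lemma~\ref{uno} for $S$-invariance and check $L$-invariance of $\diamond_{L,S}^m(V)$ directly via $LS^j=S^jL$ and the $L$-invariance of $\Box_L^m(V)$. Your additional minimality remark is correct but, as you note, not used later.
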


\begin{proof}
By definition, 
\begin{eqnarray*}
S^m(\Box_L^m(V)) &=& S^m(V+L(V)+L^2(V)+\cdots+L^m(V))\\
&=& S^m(V)+L(S^m(V))+L^2(S^m(V))+\cdots+L^m(S^m(V))) \\
&\subseteq&  V+L(V)+L^2(V)+\cdots+L^m(V) = \Box_L^m(V),
\end{eqnarray*}
since  $S,L$  commute. This proves that $\Box_L^m(V)$ is $S^m$-invariant, and Lemma \ref{uno} implies that $\diamond_{L,S}^m(V)= \Box_S^m(\Box_L^m(V)) $ is $S$-invariant. On the other hand, the identity $SL=LS$ implies that
\begin{eqnarray*}
L(\diamond_{L,S}^m(V)) &=&  L(\Box_L^m(V)+S(\Box_L^m(V))+ \cdots+S^m(\Box_L^m(V)))\\
&=& L(\Box_L^m(V))+S(L(\Box_L^m(V)))+\cdots+S^m(L(\Box_L^m(V)))  \\
&\subseteq & \Box_L^m(V)+S(\Box_L^m(V))+\cdots+S^m(\Box_L^m(V)) = \diamond_{L,S}^m(V),
\end{eqnarray*} 
so  that $\diamond_{L,S}^m(V)$ is $L$-invariant. Finally, $V\subseteq \Box_L^m(V) \subseteq \diamond_{L,S}^m(V)$.
\end{proof}

\begin{proof}[Proof of Theorem \ref{VdentroW}] We apply Lemma \ref{dos} with $E=X$, $L=\Delta_{h_1}$ and $S=\Delta_{h_2}$ to conclude that 
$V\subseteq W=\diamond_{\Delta_{h_1},\Delta_{h_2}}^m(V)$ and $W$ is a finite dimensional subspace of $X$ satisfying $\Delta_{h_i}(W)\subseteq W$ , $i=1,2$. Hence we can apply Anselone-Korevaar's Theorem to $W$ and conclude that this space admits an algebraic basis of the form \eqref{base}. In particular, all elements of $V$ are exponential polynomials.
\end{proof}

\begin{remark} Note that the space $W$ we have constructed for the proof of Theorem \ref{VdentroW}, satisfies  $\dim_{\mathbb{C}}W\leq (m+1)^2\dim_{\mathbb{C}}V$. With a different proof, we can substitute this inequality by the exact formula $\dim_{\mathbb{C}}W = m\dim_{\mathbb{C}}V$ (see the proof of Theorem $\ref{semirecta}$ below, which can be fully adapted to this context). 
\end{remark}

It is interesting to observe that condition $h_1/h_2\not\in\mathbb{Q}$ in Theorem \ref{VdentroW} can not be weakened. Indeed, if  $h_1/h_2 \in\mathbb{Q}$ and $m\geq 1$ then there are finite dimensional subspaces $V$ of $X$ such that $\Delta_{h_1}^m(V)\cup \Delta_{h_2}^m(V) \subseteq V$ and no finite dimensional translation invariant subspace $W$ of $X$  satisfies $V\subseteq W$. To construct these spaces we need to use the following technical results:
\begin{lemma}\label{propiedaddiferencias} Assume that $f:\mathbb{R}\to \mathbb{C}$ satisfies $\Delta_h^mf=0$, and let $p\in \mathbb{Z}$. Then $\Delta_{ph}^mf=0$. 
\end{lemma}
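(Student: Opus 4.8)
The plan is to reduce the statement to a purely algebraic identity in the shift operators. Recall that $\Delta_h^m f = 0$ says that $(\tau_h - 1_d)^m f = 0$; in other words, writing $E = \tau_h$ for the shift by $h$, we have $(E-1_d)^m f = 0$. The key observation is that the shift by $ph$ is exactly $E^p = \tau_{ph}$, so that $\Delta_{ph}^m f = (E^p - 1_d)^m f$. Hence the whole lemma comes down to showing that, as operators acting on $f$, the operator $(E^p - 1_d)^m$ is a right-multiple of $(E - 1_d)^m$ — i.e. that there is a polynomial (in $E$, or in $E$ and $E^{-1}$ when $p$ is negative) $Q(E)$ with $(E^p - 1_d)^m = Q(E)\,(E - 1_d)^m$. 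Once that factorization is in hand, $(E-1_d)^m f = 0$ immediately forces $(E^p-1_d)^m f = 0$, which is the claim.

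For $p \geq 0$ this is immediate: $E^p - 1_d = (E-1_d)(E^{p-1} + E^{p-2} + \cdots + E + 1_d)$, so $(E^p - 1_d)^m = (E-1_d)^m \bigl(E^{p-1} + \cdots + 1_d\bigr)^m$, and all these operators commute since they are polynomials in the single operator $E$. For $p = 0$ the statement is trivial ($\Delta_0^m f = 0$). For negative $p$, write $p = -q$ with $q > 0$; then $E^p - 1_d = E^{-q} - 1_d = -E^{-q}(E^q - 1_d)$, so $(E^p - 1_d)^m = (-1)^m E^{-mq}(E^q - 1_d)^m = (-1)^m E^{-mq}(E-1_d)^m(E^{q-1}+\cdots+1_d)^m$, and again $(E-1_d)^m f = 0$ kills everything. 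Here I am using that $\tau_h$ is invertible with $\tau_h^{-1} = \tau_{-h}$, which holds both for ordinary functions and for distributions.

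If one prefers to avoid operator language entirely, the same argument can be phrased combinatorially: one proves by induction on $p \geq 1$ that $\Delta_{ph}^m f$ is a finite linear combination, with integer coefficients, of the translates $\tau_{jh}(\Delta_h^m f) = \Delta_h^m f(\cdot + jh)$ for $j$ in some finite range, using the binomial-type identity relating $\Delta_{(p+1)h}$ to $\Delta_{ph}$ and $\Delta_h$; each such translate vanishes by hypothesis. I expect no real obstacle here — the only mild subtlety is bookkeeping the negative-$p$ case, which is handled cleanly by the $-E^{-q}(E^q-1_d)$ identity above, and noting that the argument is valid verbatim in the distributional setting because translation is a well-defined invertible operation on distributions.
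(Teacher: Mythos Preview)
Your proof is correct. Both your argument and the paper's rest on the commutativity of the shift operators, but the execution differs: the paper proceeds inductively on $m$, using at each step the fact that the periods of a function form an additive subgroup of $\mathbb{R}$ to replace one factor $\Delta_h$ by $\Delta_{ph}$ at a time (so $\Delta_h^m f=0 \Rightarrow \Delta_{ph}\Delta_h^{m-1}f=0 \Rightarrow \cdots \Rightarrow \Delta_{ph}^m f=0$). Your approach is a one-shot polynomial factorization $(E^p-1_d)=(E-1_d)(E^{p-1}+\cdots+1_d)$ in the commutative ring $\mathbb{Z}[E,E^{-1}]$, which immediately gives $(E^p-1_d)^m$ as a multiple of $(E-1_d)^m$. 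The factorization viewpoint is arguably cleaner and makes the negative-$p$ case transparent via $E^{-q}-1_d=-E^{-q}(E^q-1_d)$; the paper's inductive version is slightly more hands-on but avoids any mention of operator inverses or Laurent polynomials. Either way the content is the same.
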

\begin{proof} For $m=1$ the result is trivial, since the periods of the function $f$ form an additive subgroup of $\mathbb{R}$. For $m\geq 2$ the result follows from commutativity of the composition of the operators $\Delta_h$ (i.e., we use that $\Delta_h\Delta_kf=\Delta_k\Delta_hf$). Concretely, $\Delta_h^mf=\Delta_h(\Delta_h^{m-1}f)=0$ implies that $h$ is a period of $\Delta_h^{m-1}f$. Hence $ph$ is also a period of this function and $\Delta_{ph}(\Delta_h^{m-1}f)=0$. Now we use that $\Delta_{ph}(\Delta_h^{m-1}f)=\Delta_{h}(\Delta_{ph}\Delta_h^{m-2}f)$ and iterate the argument several times to conclude that $\Delta_{ph}^mf=0$.
\end{proof}
\begin{lemma} \label{ecuaciondiferencias} Let $h>0$ and assume that $g\in\mathbf{C}(\mathbb{R})$ satisfies $g(h\mathbb{Z})=\{0\}$. Then there exists $f\in\mathbf{C}(\mathbb{R})$ such that $f(h\mathbb{Z})=\{0\}$ and $\Delta_hf=g$. Consequently, for each $m\geq 1$ there exists $F_m\in\mathbf{C}(\mathbb{R})$ such that $F_m(h\mathbb{Z})=\{0\}$ and $\Delta_h^mF_m=g$. 
\end{lemma}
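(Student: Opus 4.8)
The plan is to first establish the base case $m=1$ by an explicit construction, and then bootstrap to general $m$ by iterating. For $m=1$: given $g\in\mathbf{C}(\mathbb{R})$ with $g(h\mathbb{Z})=\{0\}$, I want a continuous $f$ with $f(h\mathbb{Z})=\{0\}$ and $f(t+h)-f(t)=g(t)$ for all $t$. On the interval $[0,h]$ I can simply set $f$ equal to any continuous function vanishing at both endpoints $0$ and $h$ — say $f\equiv 0$ on $[0,h]$ — and then extend to all of $\mathbb{R}$ by the recursion $f(t+h)=f(t)+g(t)$ for $t\ge 0$ and $f(t-h)=f(t)-g(t-h)$ for $t\le h$. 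Concretely, for $t\in\mathbb{R}$ write $t=nh+s$ with $n\in\mathbb{Z}$ and $s\in[0,h)$; then define
\[
f(t)=\begin{cases} \sum_{j=0}^{n-1} g(jh+s), & n\ge 1,\\ 0, & n=0,\\ -\sum_{j=n}^{-1} g(jh+s), & n\le -1.\end{cases}
\]
One checks directly that $\Delta_h f=g$ and that $f(h\mathbb{Z})=\{0\}$ since each summand $g(jh+s)$ with $s=0$ is $g(jh)=0$, and for $s=0$ the $n=0$ branch also gives $0$; more precisely $f(nh)=\sum_{j=0}^{n-1}g(jh)=0$.

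The only nontrivial point is continuity of $f$, and in particular continuity at the ``seam'' points $t=nh$ where the floor index $n$ jumps. Approaching $nh$ from the right, $f(t)\to\sum_{j=0}^{n-1}g(jh)$ using the $n$-branch; approaching from the left, $t=(n-1)h+s$ with $s\to h^-$, so $f(t)\to\sum_{j=0}^{n-2}g(jh+h)=\sum_{j=1}^{n-1}g(jh)$, and since $g(0)=0$ these agree (with the obvious analogous computation for $n\le 0$). Away from the seams, $f$ is a finite sum of translates of the continuous function $g$, hence continuous. So $f\in\mathbf{C}(\mathbb{R})$, which completes the case $m=1$.

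For the consequence, I iterate: given $g$ with $g(h\mathbb{Z})=\{0\}$, apply the $m=1$ case to produce $F^{(1)}\in\mathbf{C}(\mathbb{R})$ with $F^{(1)}(h\mathbb{Z})=\{0\}$ and $\Delta_h F^{(1)}=g$; then apply it again to $F^{(1)}$ (whose values on $h\mathbb{Z}$ vanish) to get $F^{(2)}$ with $F^{(2)}(h\mathbb{Z})=\{0\}$ and $\Delta_h F^{(2)}=F^{(1)}$, so $\Delta_h^2 F^{(2)}=g$; after $m$ steps we obtain $F_m:=F^{(m)}\in\mathbf{C}(\mathbb{R})$ with $F_m(h\mathbb{Z})=\{0\}$ and $\Delta_h^m F_m=g$. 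The main obstacle, such as it is, is purely the continuity verification at the seam points $nh$, which is exactly where the hypothesis $g(h\mathbb{Z})=\{0\}$ (equivalently $g(0)=0$ after translation) is used; everything else is bookkeeping.
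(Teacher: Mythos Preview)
Your construction is exactly the one in the paper (same piecewise telescoping sum with $f\equiv 0$ on $[0,h)$, just with the variables relabelled), and your iteration argument for general $m$ is precisely what the paper means by ``follows by iteration of this argument.'' The only difference is that you have spelled out the continuity check at the seam points $nh$, which the paper leaves as an easy verification; your check is correct.
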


\begin{proof} Given $g\in\mathbf{C}(\mathbb{R})$ satisfying $g(h\mathbb{Z})=\{0\}$, it is easy to check that the function 
\[
f(z)=\left \{
\begin{array}{cccccc}
\sum_{j=0}^{k-1}g(x+jh) & \text{if} & z=x+kh, \ k\in\mathbb{N}\setminus\{0\}, \text{ and } x\in [0,h) \\
-\sum_{j=1}^{k}g(x-jh) & \text{if} & z=x-kh, \ k\in\mathbb{N}\setminus\{0\}, \text{ and } x\in [0,h) \\
0 & \text{if} & z\in [0,h) \\
\end{array} \right.
\]
is continuous and satisfies $f(h\mathbb{Z})=\{0\}$ and $\Delta_hf=g$. The second claim of the lemma follows by iteration of this argument.
\end{proof} 
Let us now assume that $h_1,h_2>0$, $h_1/h_2\in\mathbb{Q}$ , and $m\in\mathbb{N}$ ($m\geq 1$). Obviously, there exists $h>0$ and $p,q\in\mathbb{N}$ such that $h_1=ph$ and $h_2=qh$. Consider the function $\phi\in\mathbf{C}(\mathbb{R})$ defined by $\phi(x)=|x|$ for $|x|\leq h/2$ and $\phi(x+h)=\phi(x)$ for all $x\in\mathbb{R}$ and use Lemma \ref{ecuaciondiferencias} with $g=\phi$ to construct, for each $m\geq 2$, a function $f_m\in\mathbf{C}(\mathbb{R})$ such that  $\Delta_h^{m-1}f_m=\phi$. Take $f_1=\phi$. Then $\Delta_h^{m}f_m=\Delta_h\phi=0$ for all $m$. This, in conjunction with Lemma \ref{propiedaddiferencias}, implies that the one dimensional space $V_m=\mathbf{span}\{f_{m}\}$ satisfies   $\Delta_{h_1}^{m}(V_m)\cup \Delta_{h_2}^{m}(V_m)=\{0\} \subseteq V_m$. On the other hand, $V_m$ cannot be contained into any finite dimensional translation invariant subspace of $X$, since $f_m\in V_m$ is not an exponential polynomial (indeed, it is not an analytic function).

\begin{corollary}[Montel's Theorem for distributions] \label{monteltheorem} Assume that $f$ is a complex valued distribution such that $\Delta_{h_1}^mf =\Delta_{h_2}^mf=0$ for certain non-zero real numbers $h_1,h_2$ such that  $h_1/h_2\not\in\mathbb{Q}$. Then $f$ is an ordinary polynomial of degree $\leq m-1$.
\end{corollary}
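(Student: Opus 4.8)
The plan is to deduce the Montel-type statement from Theorem \ref{VdentroW} by applying it to a cleverly chosen finite dimensional space built out of $f$. Suppose $f$ is a distribution with $\Delta_{h_1}^m f = \Delta_{h_2}^m f = 0$ and $h_1/h_2\notin\mathbb{Q}$. The natural candidate is $V=\mathbf{span}\{f\}$, but this one-dimensional space need not be $\Delta_{h_i}^m$-invariant in the literal sense unless we know $\Delta_{h_i}^m f\in V$; here $\Delta_{h_i}^m f=0\in V$, so in fact $\Delta_{h_1}^m(V)\cup\Delta_{h_2}^m(V)=\{0\}\subseteq V$ and $V$ is trivially finite dimensional. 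Hence Theorem \ref{VdentroW} applies and yields a finite dimensional translation invariant subspace $W\supseteq V$, so $f\in W$ and in particular $f$ is an exponential polynomial $\sum_{k}\sum_{j} c_{j,k}\, t^{j}e^{\lambda_k t}$.

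The remaining work is to pin down which exponential polynomial $f$ can be. First I would note that the differential equation characterization (Anselone--Korevaar) tells us $W$ is spanned by functions $t^{k-1}e^{\lambda t}$; I would then use the hypothesis $\Delta_{h_1}^m f=0$ directly on the exponential-polynomial form of $f$. The key computation is that for a single term $t^{j}e^{\lambda t}$, the operator $\Delta_h$ acts, modulo lower-degree terms in $t$, by multiplication by $(e^{\lambda h}-1)$; more precisely $\Delta_h(t^{j}e^{\lambda t}) = (e^{\lambda h}-1)t^{j}e^{\lambda t} + (\text{terms of degree} <j \text{ in } t)$ when $\lambda\neq 0$, and $\Delta_h(t^{j}) = j h\, t^{j-1}+(\text{lower order})$ when $\lambda=0$. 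Consequently, applying $\Delta_{h_1}^m$ to $f$ and looking at the coefficient of the highest-degree term attached to each frequency $\lambda_k$, one finds $(e^{\lambda_k h_1}-1)^m = 0$ whenever $\lambda_k\neq 0$ appears in $f$, forcing $e^{\lambda_k h_1}=1$, i.e. $\lambda_k\in \frac{2\pi i}{h_1}\mathbb{Z}$. The same argument with $h_2$ gives $\lambda_k\in\frac{2\pi i}{h_2}\mathbb{Z}$. Since $h_1/h_2\notin\mathbb{Q}$, these two lattices intersect only at $0$, so the only frequency occurring in $f$ is $\lambda=0$; thus $f$ is an ordinary polynomial.

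Finally I would bound its degree. Writing $f(t)=a_0+a_1 t+\cdots+a_d t^{d}$ with $a_d\neq 0$, the identity $\Delta_{h_1}(t^{d})=d h_1 t^{d-1}+\cdots$ gives $\Delta_{h_1}^m f = d(d-1)\cdots(d-m+1)h_1^m\, a_d\, t^{d-m}+(\text{lower order})$, which vanishes only if $d(d-1)\cdots(d-m+1)=0$, i.e. $d\leq m-1$. (Equivalently, this is the classical fact that $\Delta_{h}^m$ annihilates exactly the polynomials of degree $<m$, applied here with $h=h_1$.) This yields $f(t)=a_0+a_1t+\cdots+a_{m-1}t^{m-1}$, as claimed.

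I expect the main obstacle to be purely bookkeeping rather than conceptual: carefully justifying the ``highest-order term'' extraction for a sum of exponential-polynomial terms with possibly several frequencies and several powers of $t$, so that one legitimately concludes $(e^{\lambda_k h_i}-1)^m=0$ for each frequency separately. This is handled cleanly by ordering terms first by frequency (grouping all powers of $t$ attached to a fixed $\lambda_k$) and, within a frequency, by degree in $t$; since the exponentials $e^{\lambda_k t}$ with distinct $\lambda_k$ are linearly independent over the polynomial ring, the vanishing of $\Delta_{h_i}^m f$ splits across frequencies, and within each frequency the leading power of $t$ survives with coefficient a nonzero multiple of $(e^{\lambda_k h_i}-1)^m$ (or of a product of falling factorials when $\lambda_k=0$). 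Everything else is the elementary arithmetic of the two lattices $\frac{2\pi i}{h_1}\mathbb{Z}$ and $\frac{2\pi i}{h_2}\mathbb{Z}$ and of falling factorials.
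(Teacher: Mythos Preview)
Your proof is correct. The first step --- taking $V=\mathbf{span}\{f\}$ and invoking Theorem~\ref{VdentroW} to conclude that $f$ is an exponential polynomial --- coincides exactly with the paper's opening move. The second half, however, proceeds differently.

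The paper works matricially: it fixes the ambient space $\mathcal{S}$ spanned by the exponential monomials, writes the matrix of $\Delta_h$ in block-diagonal form $\mathbf{diag}[A_0,A_1,\dots,A_s]$, and then argues that each $A_i$ with $i\geq 1$ is invertible while $A_0$ is nilpotent, so that $\ker(\Delta_h^m)_{|\mathcal{S}}=\Pi_{m-1}$ by a rank count. You instead work termwise: since the functions $t^j e^{\lambda_k t}$ with distinct $\lambda_k$ are linearly independent, $\Delta_{h_i}^m f=0$ splits across frequencies; at each nonzero frequency the leading $t$-coefficient survives multiplied by $(e^{\lambda_k h_i}-1)^m$, forcing $e^{\lambda_k h_i}=1$, and then the arithmetic of the two lattices $\tfrac{2\pi i}{h_1}\mathbb{Z}$ and $\tfrac{2\pi i}{h_2}\mathbb{Z}$ (which meet only at $0$ because $h_1/h_2\notin\mathbb{Q}$) kills all nonzero frequencies. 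The degree bound then follows from the falling-factorial computation.

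Your route is a little more elementary and, in one respect, more careful: the paper's assertion that each $A_i$ ($i\geq 1$) is invertible tacitly needs $e^{\lambda_i h}\neq 1$, which is not automatic for a single $h\in\{h_1,h_2\}$; your lattice argument handles this directly by using both step sizes. Conversely, the paper's block-matrix computation is not wasted effort --- the matrices \eqref{A0}--\eqref{Ai} are reused verbatim in the proof of Theorem~\ref{main}, so the structural viewpoint pays dividends later.
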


\begin{proof} Assume that $\Delta_{h_1}^mf =\Delta_{h_2}^mf =0$. Then $V=\mathbf{span}\{f\}$ is a one dimensional space of complex valued distributions which   
satisfies the hypotheses of Theorem \ref{VdentroW}. Hence all elements of $V$ are exponential polynomials. In particular, $f$ is an exponential polynomial,
\[
f(t)=\sum_{k=0}^{m(\lambda_0)-1}a_{0,k}t^{k}+\sum_{i=1}^s \sum_{k=0}^{m(\lambda_i)-1}a_{i,k}t^{k}e^{\lambda_i t}
\]
and we can assume that $m(0)\geq m$ with no loss of generality. Let 
\[
\beta = \{
 t^{k-1}e^{\lambda_i t}, \ \  k=1,\cdots,m(\lambda_i) \text{ and } i=0,1,2,\cdots,s\} 
\]
and $\mathcal{S}=\mathbf{span}\{\beta\}$ be an space with a basis of the form \eqref{base} which contains $V$.  Let us consider the linear map $\Delta_{h}:\mathcal{S}\to\mathcal{S}$ induced by the operator $\Delta_h$ when restricted to $\mathcal{S}$.  The matrix associated to this operator with respect to the basis $\beta$ is block diagonal,  $A=\mathbf{diag}[A_0,A_1,\cdots,A_s]$, with 
\begin{equation}\label{A0}
A_0=\left [
\begin{array}{cccccc}
0 & h & h^2 & \cdots & h^{m(0)-1} \\
0 & 0 & 2h & \cdots & \binom{m(0)-1}{2}h^{m(0)-2}\\
\vdots & \vdots & \ddots & \cdots & \vdots \\
0 & 0 & 0 & \cdots &  \binom{m(0)-1}{m(0)-2}h\\
0 & 0 & 0 & \cdots &  0
\end{array} \right]
\end{equation}
and
\begin{equation} \label{Ai}
A_i=\left [
\begin{array}{cccccc}
e^{\lambda_ih}-1 & h e^{\lambda_ih}& h^2e^{\lambda_ih} & \cdots & h^{m(i)-1}e^{\lambda_ih} \\
0 & e^{\lambda_ih}-1  & 2h e^{\lambda_ih} & \cdots & \binom{m(i)-1}{2}h^{m(i)-2}e^{\lambda_ih}\\
\vdots & \vdots & \ddots & \cdots & \vdots \\
0 & 0 & 0 & \cdots &  \binom{m(i)-1}{m(i)-2}h e^{\lambda_ih}\\
0 &  0 & 0 & \cdots &  e^{\lambda_ih}-1
\end{array} \right],
\end{equation}
for $i=1,2,\dots, s$. It follows that the matrix associated to $(\Delta_h^m)_{|\mathcal{S}}$ with respect to the basis $\beta$ is given by $A^m= \mathbf{diag}[A_0^m,A_1^m,\cdots,A_s^m]$. Obviously, the matrices $A_i^m$ ($i=1,2,\cdots,s$) are invertible since the corresponding  $A_i$  are so. On the other hand, $\textbf{rank}(A_0^m)=m(0)-m$ and $$\mathbf{ker}(A_0^m)=\mathbf{span}\{(0,0,\cdots,0,1^{\text{(i-th position)}},0,\cdots, 0): i=1,2,\cdots,m\}.$$ It follows that 
$\mathbf{rank}(A^m)=\dim_{\mathbb{C}}\mathcal{S}-m$, so that $\dim_{\mathbb{C}} \mathbf{ker}(A^m)=m$. On the other hand, a simple computation shows that the space of ordinary polynomials of degree $\leq m-1$, which we denote by $\Pi_{m-1}$, is contained into 
$\mathbf{ker}(\Delta_h^m)$.  Hence $\mathbf{ker}(\Delta_h^m)=\Pi_{m-1}$, since both spaces have the same dimension. This, in conjunction with  $f\in \mathbf{ker}(\Delta_h^m)$, ends the proof. 

\end{proof}

\begin{theorem} \label{semirecta}  Let $V$ be a finite dimensional subspace of the space of continuous complex valued functions defined on the semi-infinite interval $(0,\infty)$ and assume that $\Delta_{h_k}^m(V)\subseteq V$  for an infinite sequence of positive real numbers $\{h_k\}_{k=1}^\infty$ which converges to zero. Then all elements of $V$ are exponential polynomials. 
\end{theorem}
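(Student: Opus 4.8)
The plan is to \emph{transfer the problem from the difference operators to the ordinary derivative} $D=d/dt$, and then read off the conclusion from classical ODE theory. The heart of the matter is the claim that every $f\in V$ is of class $C^{\infty}$ on $(0,\infty)$ and that $D^{m}(V)\subseteq V$. Granting this claim, the theorem is immediate: by Lemma~\ref{uno} (applied with $L=D$ and $E=C^{\infty}(0,\infty)$) the space $W=\Box_{D}^{m}(V)=V+D(V)+\dots+D^{m-1}(V)$ is finite dimensional, $D$-invariant and contains $V$; and a finite dimensional $D$-invariant space of smooth functions consists of exponential polynomials, because for $g\in W$ the list $g,Dg,\dots,D^{\dim W}g$ lies in $W$ and is hence linearly dependent, so $g$ solves a homogeneous linear differential equation with constant coefficients --- equivalently, by Cayley--Hamilton $W\subseteq\ker Q(D)$ with $Q$ the characteristic polynomial of $D_{|W}$, and $\ker Q(D)$ has a basis of the form \eqref{base}. (More precisely, since $V$ itself is $D^{m}$-invariant one gets $V\subseteq\ker R(D^{m})$ where $R$ is the characteristic polynomial of $(D^{m})_{|V}$; this $\ker R(D^{m})$ is a translation invariant space of dimension exactly $m\dim_{\mathbb{C}}V$, which is the ``exact formula'' announced in the Remark following Theorem~\ref{VdentroW}.)

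To prove the claim I would first encode membership in $V$ by smooth test functions. Fix a basis $v_{1},\dots,v_{d}$ of $V$ and functionals $\mu_{i}\in V^{*}$ of the form $\mu_{i}(g)=\int_{0}^{\infty}g\,\phi_{i}$ with $\phi_{i}\in C_{c}^{\infty}(0,\infty)$ and $\mu_{i}(v_{j})=\delta_{ij}$ (such $\phi_{i}$ exist because test functions separate continuous functions). Since $\Delta_{h_{k}}^{m}f\in V$ we have $\Delta_{h_{k}}^{m}f=\sum_{i}\mu_{i}(\Delta_{h_{k}}^{m}f)\,v_{i}$, and a change of variable gives $\mu_{i}(\Delta_{h}^{m}f)=\int_{0}^{\infty}f\cdot(\Delta_{-h}^{m}\phi_{i})=:G_{i}(h)$, where $\Delta_{-h}^{m}\phi_{i}(t)=\sum_{l=0}^{m}\binom{m}{l}(-1)^{m-l}\phi_{i}(t-lh)$. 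As $\phi_{i}$ is smooth and compactly supported, $G_{i}$ is of class $C^{\infty}$ near $0$ and vanishes to order $m$ there (because $\|\Delta_{-h}^{m}\phi_{i}\|_{\infty}=O(h^{m})$); set $F:=\sum_{i}\tfrac{1}{m!}G_{i}^{(m)}(0)\,v_{i}\in V$, so that $\Delta_{h_{k}}^{m}f=\sum_{i}G_{i}(h_{k})v_{i}$ for every $k$ and $h_{k}^{-m}G_{i}(h_{k})\to\tfrac{1}{m!}G_{i}^{(m)}(0)$.

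Now I would ``integrate the hypothesis up''. Using the telescoping identity $\Delta_{ph}^{m}=\bigl(\sum_{j=0}^{p-1}\tau_{h}^{j}\bigr)^{m}\Delta_{h}^{m}$ applied to $f$, we get $\Delta_{ph_{k}}^{m}f=\sum_{i}G_{i}(h_{k})\bigl(\sum_{j=0}^{p-1}\tau_{h_{k}}^{j}\bigr)^{m}v_{i}$. Fix $s>0$ and take $p=p_{k}=\lfloor s/h_{k}\rfloor$, so that $p_{k}h_{k}\to s$. Since $h_{k}\sum_{j=0}^{p_{k}-1}\tau_{h_{k}}^{j}g\to I_{s}g:=\int_{0}^{s}g(\cdot+u)\,du$ uniformly on compact subsets of $(0,\infty)$ for every continuous $g$, passing to the limit gives $\Delta_{p_{k}h_{k}}^{m}f\to I_{s}^{m}F$ uniformly on compacta; but continuity of $f$ gives directly $\Delta_{p_{k}h_{k}}^{m}f\to\Delta_{s}^{m}f$. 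Hence $\Delta_{s}^{m}f=I_{s}^{m}F$ for every $s>0$. Choosing $\Phi\in C^{m}(0,\infty)$ with $\Phi^{(m)}=F$ and using the classical identity $\Delta_{s}^{m}\Phi=I_{s}^{m}\Phi^{(m)}=I_{s}^{m}F$, we obtain $\Delta_{s}^{m}(f-\Phi)\equiv0$ for all $s>0$; since $f-\Phi$ is continuous, the classical characterisation of continuous solutions of this functional equation forces $f-\Phi\in\Pi_{m-1}$. Thus $f=\Phi+(\text{polynomial})$ is of class $C^{m}$ and $f^{(m)}=F\in V$; and since $f^{(m)}\in V\subseteq C^{m}$ forces $f\in C^{2m}$, iterating gives $V\subseteq C^{\infty}(0,\infty)$ and $D^{m}(V)\subseteq V$, which is the claim.

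The step I expect to be the genuine obstacle is precisely this smoothing, namely the passage from the countable data $\{\Delta_{h_{k}}^{m}f\}_{k}$ to the continuous identity $\Delta_{s}^{m}f=I_{s}^{m}F$ for all $s$. This is where the hypothesis $h_{k}\to0$ is indispensable --- finitely many step sizes, or infinitely many with pairwise ratios rational and bounded away from $0$, would not do --- and the mechanism is the smoothness in the parameter $h$ of $h\mapsto\mu_{i}(\Delta_{h}^{m}f)$ together with the Riemann-sum limit of the telescoped operators $h_{k}\sum_{j<p_{k}}\tau_{h_{k}}^{j}$. One must also check that every operation remains inside the one-sided domain $(0,\infty)$ (translations by the positive numbers $h_{k}$, the auxiliary function $\Phi$, and all the differences used are defined there), and invoke the classical fact that a continuous function whose $m$-th differences all vanish is a polynomial of degree $<m$. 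An alternative that looks shorter is to mollify, $f\mapsto f*\rho_{\varepsilon}$, obtaining at once smooth finite dimensional $D^{m}$-invariant spaces $V_{\varepsilon}$ with $\dim V_{\varepsilon}\le\dim_{\mathbb{C}}V$; but then one has to prove that the uniform-on-compacta limit of the exponential polynomials $f*\rho_{\varepsilon}$ --- which solve linear differential equations of order $\le m\dim_{\mathbb{C}}V$ with normalised, hence uniformly bounded, coefficients --- is again an exponential polynomial, and that requires a separate compactness argument for solutions of ODEs. I would prefer the first route, as it is self-contained.
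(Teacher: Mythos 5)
Your proof is correct, but it takes a genuinely different route from the paper's. The paper mollifies the basis of $V$: writing $\Delta_h^m E=A(h)E$, it passes to $F_\varphi=E\ast\varphi$ (which satisfies the same relation with the same matrix $A(h)$), solves for $A(h)$ from the smooth side to get $A(h)/h^m\to B$, and then transfers the limit back to $E$ in the sense of distributions to obtain $E^{(m)}=BE$, finishing with the companion first-order system. You instead dualize: pairing $f$ against fixed test functionals dual to a basis of $V$ makes the coefficient functions $G_i(h)$ smooth in the step $h$ and $O(h^m)$, and your genuinely new ingredient is the telescoping identity $\Delta_{ph}^m=\Delta_h^m\bigl(\sum_{j<p}\tau_h^j\bigr)^m$ together with a Riemann-sum limit, which upgrades the countable family of relations $\Delta_{h_k}^mf\in V$ to the continuum identity $\Delta_s^mf=I_s^mF$ for all $s>0$; the classical Fr\'{e}chet-type theorem (a continuous $g$ with $\Delta_s^mg\equiv 0$ for all $s$ lies in $\Pi_{m-1}$) then gives $f\in C^m$ with $f^{(m)}=F\in V$. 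What your route buys: it is entirely distribution-free, it produces $V\subseteq C^\infty(0,\infty)$ and $D^m(V)\subseteq V$ explicitly (which makes the exact count $\dim_{\mathbb{C}}W=m\dim_{\mathbb{C}}V$ of the Remark transparent via $W=\ker R(D^m)$, $R$ the characteristic polynomial of $(D^m)_{|V}$), and it isolates precisely where $h_k\to 0$ enters. What it costs: you must import the classical characterisation of continuous solutions of $\Delta_s^mg=0$, which the paper's distributional computation avoids, and the convergence of the iterated averaged translates $\bigl(h_k\sum_{j<p_k}\tau_{h_k}^j\bigr)^m$ on the one-sided domain, while routine, does need the operator-norm-on-compacta check you indicate. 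I see no genuine gap; both of those steps are sound and standard.
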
 

\begin{proof} Assume that $\dim_{\mathbb{C}}V=N<\infty$ and $\Delta_{h_k}^m(V)\subseteq V$  for an infinite sequence of positive real numbers $\{h_k\}_{k=1}^\infty\searrow 0$. Let $\{e_1(t),\cdots,e_N(t)\}$ be a basis of $V$ and let $h\in\{h_k\}_{k=1}^\infty$. Then 
\begin{equation} \label{1}
\Delta_h^me_i(t)=\sum_{k=1}^Na_{ik}(h)e_k(t)\  \text{ for all } t\in (0,\infty) \text{ and } i=1,\cdots,N.
\end{equation}
Let us set $E(t)=(e_1(t),e_2(t),\cdots,e_N(t))^T$ (where $v^T$ denotes the transpose of the vector $v$). Then \eqref{1} can be written in matrix form as
\begin{equation} \label{2}
\Delta_h^mE(t)=A(h)E(t)\  \text{ for all } t\in (0,\infty),
\end{equation}
where $A(h)=(a_{ik}(h))_{i,k=1}^N$ is a matrix function of $h$. 

We regularize the functions $e_i(t)$ via convolution with a test function. More precisely, we consider  $\varphi(t)$ an infinitely differentiable function with compact support and introduce the new functions $f_{i,\varphi}(t)=(e_i\ast\varphi)(t)$. These functions are of class $\mathbb{C}^{(\infty)}(0,\infty)$ and, if we define $F_{\varphi}(t)= (f_{1,\varphi}(t),f_{2,\varphi}(t),\cdots,f_{N,\varphi}(t))^T$, then 
\begin{equation} \label{3}
\Delta_h^mF_{\varphi}(t)=A(h)F_{\varphi}(t)\  \text{ for all } t\in (0,\infty),
\end{equation}
since the operation of convolution is translation invariant. The novelty here is the fact that $F_{\varphi}$ is an infinitely differentiable function. Taking $\{\varphi_n\}$ a sequence of test functions converging (in distributional sense)  to Dirac's delta function $\delta$, we have that $f_{i,\varphi_n}=e_i\ast\varphi_n\to e_i\ast\delta=e_i$, so that we can impose that, for a certain $n_0$, the set of fuctions $\{f_{i,\varphi_{n_0}}\}_{i=1}^N$ is linearly independent (since the functions $\{e_i(t)\}$ form a basis). This computation implies that we can assume, with no loss of generality, that our functions   $\{f_{i,\varphi}\}_{i=1}^N$ define a basis of the space they span. In particular, there exists a set of $N$ points $\{t_i\}_{i=1}^N\subset (0,\infty)$ such that $\det\mathbf{col}[F_{\varphi}(t_1),F_{\varphi}(t_2),\cdots,F_{\varphi}(t_N)]\neq 0$. It follows that
\[
A(h)=\mathbf{col}[\Delta_h^mF_{\varphi}(t_1),\cdots,\Delta_h^mF_{\varphi}(t_N)]\mathbf{col}[F_{\varphi}(t_1),\cdots,F_{\varphi}(t_N)]^{-1}
\]
and 
\[
\frac{A(h)}{h^m}=\mathbf{col}[\frac{\Delta_h^mF_{\varphi}(t_1)}{h^m},\cdots,\frac{\Delta_h^mF_{\varphi}(t_N)}{h^m}]\mathbf{col}[F_{\varphi}(t_1),\cdots,F_{\varphi}(t_N)]^{-1}.
\]
Taking limits for $h$ converging to zero, we get
\begin{eqnarray*}
\lim_{h\to 0}\frac{A(h)}{h^m} &=& \mathbf{col}[(F_{\varphi})^{(m)}(t_1),\cdots,(F_{\varphi})^{(m)}(t_N)]\mathbf{col}[F_{\varphi}(t_1),\cdots,F_{\varphi}(t_N)]^{-1}\\
&=& B\in M_N(\mathbb{C}).
\end{eqnarray*}
Furthermore, the convergence of $A(h)/h^m$ to the matrix $B$ is in the sense of all norms of $M_N(\mathbb{C})$, since this space is of finite dimension. This implies that, if we fix $K$ a compact subset of $(0,\infty)$ then
\[
\|\frac{A(h)}{h^m}E(t)-BE(t)\|_{\mathbf{C}(K)}\leq \|\frac{A(h)}{h^m}-B\|\|E(t)\|_{\mathbf{C}(K)}\to 0 \text{ (for } h\to 0\text{ )}.
\] 
It follows that $\frac{1}{h^m}\Delta_h^mE$ converges, in distributional sense, to $BE(t)$. On the other hand, a simple computation shows that, if $\phi$ is any test function, then 
\begin{eqnarray*}
\frac{1}{h^m}\Delta_h^mE\{\phi\} &=& E\{\frac{1}{h^m}\Delta_{-h}^m\phi\}\\
 &=& E\{(-1)^m\frac{1}{(-h)^m}\Delta_{-h}^m\phi\}\to E\{(-1)^m\phi^{(m)}\} =E^{(m)}\{\phi\}.
\end{eqnarray*}
Thus $E^{(m)}=BE$ in distributional sense. But the continuity of $E(t)$ implies that \begin{equation} \label{differential} E^{(m)}=BE\end{equation} in the ordinary sense. Let us set $\mathcal{E}=(E,E',\cdots,E^{(m-1)})^T$. This transforms  equation \eqref{differential} into the linear ordinary differential equation
\begin{equation} \label{D2}
\mathcal{E}'=\left[ \begin{array}{cccccc}
0 & I & 0 & 0 & \cdots & 0 \\
0 & 0 & I & 0&  \cdots & 0\\
\vdots & \vdots & \ddots & \cdots  & \cdots & \vdots \\
0 & 0 & 0 & \cdots & 0 & I\\
B & 0 & 0 & \cdots & 0 &  0
\end{array} \right] \mathcal{E}
\end{equation}
Now, it is well known that each component of any solution $\mathcal{E}(t)$ of \eqref{D2} is a finite linear combination of exponential monomials of the form  \eqref{base} for an appropriate choice of the values $\lambda$ and $m(\lambda)$.
\end{proof}

\section{Characterization of $\Delta^m$-invariant subspaces}
Let us state the main result of this section.
\begin{theorem} \label{main} Assume that $V$ is a finite dimensional subspace of $X$ which satisfies $\Delta_h^m(V)\subseteq V$ for all $h\in\mathbb{R}$. Then 
there exist vector spaces $P\subset \Pi:=\mathbb{C}[t]$ and $E\subset \mathbf{C}(\mathbb{R})$ such that $V=P\oplus E$ and $E$ is invariant by translations. Consequently, $V$ is invariant by translations if and only if $P$ is so. 
\end{theorem}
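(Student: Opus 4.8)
The plan is to combine Theorem \ref{VdentroW} (applied with two incommensurable step sizes, which we are free to choose since $\Delta_h^m(V)\subseteq V$ for \emph{all} $h$) with a careful analysis of the structure of a finite-dimensional space whose elements are all exponential polynomials. First I would observe that, since $\Delta_h^m(V)\subseteq V$ for every $h\in\mathbb{R}$, in particular it holds for a pair $h_1,h_2$ with $h_1/h_2\notin\mathbb{Q}$; hence Theorem \ref{VdentroW} gives a finite-dimensional translation-invariant $W\supseteq V$, and every element of $V$ is an exponential polynomial. Write $W$ in terms of a basis of the form \eqref{base}, so that $W=\bigoplus_{i=0}^s W_{\lambda_i}$ where $W_{\lambda_i}=\mathbf{span}\{t^{k-1}e^{\lambda_i t}:k=1,\dots,m(\lambda_i)\}$ is the generalized eigenspace of the differentiation operator $D$ (equivalently of each $\Delta_h$) for the eigenvalue $\lambda_i$, with $\lambda_0=0$. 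Then $P:=W_{\lambda_0}\cap V$ is the ``polynomial part'' candidate and $E:=\bigoplus_{i\ge 1}(W_{\lambda_i}\cap V)$ ought to be the translation-invariant complement — but establishing $V=P\oplus E$ requires work, because a priori an element of $V$ is a sum of components in several $W_{\lambda_i}$ and there is no reason its individual components lie in $V$.

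The key step is therefore to show that $V$ decomposes compatibly with the spectral decomposition $W=\bigoplus_i W_{\lambda_i}$, i.e. $V=\bigoplus_i (V\cap W_{\lambda_i})$. For this I would exploit that each $\Delta_h$ preserves each $W_{\lambda_i}$ and acts on $W_{\lambda_0}$ nilpotently (its matrix $A_0$ in \eqref{A0} is strictly upper triangular) while acting invertibly on each $W_{\lambda_i}$, $i\ge 1$ (the matrices $A_i$ in \eqref{Ai} are triangular with nonzero diagonal $e^{\lambda_i h}-1$, valid for all but countably many $h$, and one can pick such an $h$). Consequently, on $W$ the operator $\Delta_h^{N}$ for $N=\dim W$ annihilates $W_{\lambda_0}$ and is invertible on $\bigoplus_{i\ge1}W_{\lambda_i}$; this furnishes a projection $\pi$ of $W$ onto $\bigoplus_{i\ge 1}W_{\lambda_i}$ along $W_{\lambda_0}$ that is a polynomial in $\Delta_h$. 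Since $V$ is $\Delta_h$-invariant (because $\Delta_h(V)\subseteq V$: note $\Delta_h^m(V)\subseteq V$ together with Lemma \ref{uno} shows $\Box_{\Delta_h}^m(V)=V$, so indeed $\Delta_h(V)\subseteq V$), it is invariant under any polynomial in $\Delta_h$, hence $\pi(V)\subseteq V$ and $(1_d-\pi)(V)\subseteq V$. This yields $V=(V\cap W_{\lambda_0})\oplus(V\cap\bigoplus_{i\ge1}W_{\lambda_i})$. Setting $P=V\cap W_{\lambda_0}\subseteq\mathbb{C}[t]=\Pi$ and $E=V\cap\bigoplus_{i\ge1}W_{\lambda_i}$, we get $V=P\oplus E$.

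It then remains to check that $E$ is translation-invariant. Here I would argue that $E=V\cap W'$ with $W'=\bigoplus_{i\ge1}W_{\lambda_i}$ translation-invariant, and that $E$ is $\Delta_h$-invariant for all $h$ (being an intersection of two $\Delta_h$-invariant spaces, using again $\Delta_h(V)\subseteq V$), and moreover $\Delta_h$ acts invertibly on $E\subseteq W'$ for suitable $h$; from $\tau_h=1_d+\Delta_h$ and the invariance of $E$ under $\Delta_h$ one concludes $\tau_h(E)\subseteq E$. Running this for a dense set of $h$ — indeed for all $h$ — gives translation invariance of $E$. The final sentence of the theorem, that $V$ is translation-invariant iff $P$ is, follows immediately: $V=P\oplus E$ with $E$ always translation-invariant, and $\tau_h$ preserves the decomposition (since $\tau_h$ preserves $W_{\lambda_0}$ and $W'$), so $\tau_h(V)\subseteq V$ forces $\tau_h(P)\subseteq V\cap W_{\lambda_0}=P$, and conversely. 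The main obstacle is the compatibility of $V$ with the spectral decomposition; once the projection $\pi$ is produced as a polynomial in a single well-chosen $\Delta_h$, the rest is routine.
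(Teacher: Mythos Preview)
There is a genuine gap. The crucial parenthetical ``note $\Delta_h^m(V)\subseteq V$ together with Lemma \ref{uno} shows $\Box_{\Delta_h}^m(V)=V$, so indeed $\Delta_h(V)\subseteq V$'' is false. Lemma \ref{uno} only tells you that $\Box_{\Delta_h}^m(V)=V+\Delta_h(V)+\cdots+\Delta_h^m(V)$ is the \emph{smallest} $\Delta_h$-invariant subspace containing $V$; it equals $V$ precisely when $V$ is already $\Delta_h$-invariant, which is what you are trying to prove. And in fact the implication fails: the paper's own Remark after Theorem \ref{main} exhibits $P=\mathbf{span}\{1,t^m\}$ with $\Delta_h^m(P)=\Pi_0\subseteq P$ but $\Delta_h t^m\notin P$, so $\Delta_h(P)\not\subseteq P$. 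Your argument uses $\Delta_h(V)\subseteq V$ twice: to push the projection $\pi$ into $V$, and to show $E$ is $\Delta_h$-invariant. Both uses collapse.

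The first use is easily repaired: the spectral projection $\pi$ onto $\bigoplus_{i\ge1}W_{\lambda_i}$ along $W_{\lambda_0}$ is in fact a polynomial in $\Delta_h^m$ (for generic $h$ the eigenvalues $0$ and $(e^{\lambda_ih}-1)^m$ are distinct), so $\Delta_h^m$-invariance of $V$ suffices to get $V=P\oplus E$; this is exactly the content of Theorem \ref{decomposition} in the paper. The second use, however, is the heart of the matter and does \emph{not} follow from general nonsense: knowing only that $E\subseteq W'$ is $\Delta_h^m$-invariant and that $\Delta_h$ is invertible on $W'$ does not force $\Delta_h(E)\subseteq E$ (think of $T=\mathrm{diag}(1,-1)$ with $T^2=I$). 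One needs the special upper-triangular structure of the blocks $A_i$ in \eqref{Ai}: this is precisely what Lemma \ref{nuevo} supplies, showing that on each $W_{\lambda_i}$ ($i\ge1$) the $\Delta_h^m$-invariant subspaces coincide with the $\Delta_h$-invariant ones. Without that lemma (or an equivalent argument), the translation invariance of $E$ is unproved.
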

In order to prove this theorem, we first need to introduce some notation and results about invariant subspaces of linear maps in the finite dimensional context. The main reference for this subject is \cite{gohberg}.

\begin{definition} Given $T:\mathbb{C}^n\to\mathbb{C}^n$ a linear transformation, and $\lambda$ any of its eigenvalues, we define the root subspace -or generalized eigenspace- associated to $\lambda$ and $T$ by the formula $R_{\lambda}(T)=\mathbf{ker}(T-\lambda I)^n$, where $I:\mathbb{C}^n\to\mathbb{C}^n$ denotes the identity operator. 
\end{definition}

The next result is well known (see \cite[Theorem 2.1.5, page 50]{gohberg}):

\begin{theorem}\label{decomposition} Let $T:\mathbb{C}^n\to\mathbb{C}^n$ be a linear transformation and  $V$ be a linear subspace of $\mathbb{C}^n$. Let $\{\lambda_0,\cdots,\lambda_t\}$ be the set of all (pairwise distinct) eigenvalues of $T$. Then $V$ is $T$-invariant if and only if $V=(V\cap R_{\lambda_0}(T))\oplus \cdots \oplus (V\cap R_{\lambda_t}(T))$ and each subspace $V_i=(V\cap R_{\lambda_i}(T))$ is $T$-invariant. 
\end{theorem}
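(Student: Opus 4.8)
The reverse implication needs no work: a direct sum of $T$-invariant subspaces is in particular a sum of $T$-invariant subspaces and hence $T$-invariant, so I would dispose of it in one line. The content lies entirely in the forward implication, and my plan is to reduce it to two standard ingredients and one key observation.

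The first ingredient is the primary decomposition theorem, which I would invoke as a known fact (it is in \cite{gohberg}): over $\mathbb{C}$ the whole space splits as $\mathbb{C}^n=R_{\lambda_0}(T)\oplus\cdots\oplus R_{\lambda_t}(T)$, with each root subspace $R_{\lambda_i}(T)=\mathbf{ker}(T-\lambda_i I)^n$ being $T$-invariant. Let $P_0,\dots,P_t$ be the attached projections, where $P_i$ maps onto $R_{\lambda_i}(T)$ along the sum of the remaining root subspaces, so that $P_0+\cdots+P_t$ is the identity and $P_iP_j=0$ for $i\neq j$. The key observation---and the step I expect to carry the whole argument---is that each $P_i$ is a \emph{polynomial} in $T$. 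I would derive this by partial fractions: factoring the minimal polynomial as $\mu_T(x)=\prod_j (x-\lambda_j)^{d_j}$ and setting $q_i(x)=\mu_T(x)/(x-\lambda_i)^{d_i}$, the polynomials $q_0,\dots,q_t$ share no common root and are therefore coprime, so a B\'ezout relation $\sum_i r_i q_i=1$ exists; the products $p_i=r_iq_i$ then satisfy $p_i(T)=P_i$, as one checks by evaluating on each $R_{\lambda_j}(T)$. The reason this matters is that $T$-invariance of $V$ immediately upgrades to $p_i(T)$-invariance, giving $P_i(V)\subseteq V$ for every $i$.

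Granting $P_i(V)\subseteq V$, the decomposition falls out. For $v\in V$ I would write $v=\sum_i P_i(v)$; each term lies in $R_{\lambda_i}(T)$ by construction and in $V$ because $P_i$ preserves $V$, hence $P_i(v)\in V\cap R_{\lambda_i}(T)=V_i$, which proves $V\subseteq\sum_i V_i$. The opposite inclusion is trivial since every $V_i\subseteq V$, and the sum is direct because it sits inside the direct sum $\bigoplus_i R_{\lambda_i}(T)$. Lastly, each $V_i=V\cap R_{\lambda_i}(T)$ is the intersection of two $T$-invariant subspaces, so it is itself $T$-invariant, which supplies the final clause of the statement. The only genuinely substantive point in the whole proof is the expression of the spectral projections as polynomials in $T$; everything else is bookkeeping around the primary decomposition.
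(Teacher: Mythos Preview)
Your argument is correct and is in fact the standard textbook route to this result: primary decomposition plus the expression of the spectral projections as polynomials in $T$ (via a B\'ezout identity among the coprime cofactors of the minimal polynomial), from which $P_i(V)\subseteq V$ and hence the decomposition of $V$ follow immediately. Note, however, that the paper does not actually prove this theorem at all; it simply quotes it as a well-known result from \cite[Theorem~2.1.5]{gohberg} and uses it as a black box in the proof of Theorem~\ref{main}. So there is no ``paper's own proof'' to compare against---your sketch is precisely the kind of proof one finds in the cited reference.
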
 


Finally, the following lemma should be known to experts. We include the proof here for the sake of completeness, since this result forms an essential part of our arguments for the proof of Theorem \ref{main}.

\begin{lemma} \label{nuevo} Let $E$ be a vector space with basis  $\beta=\{v_k\}_{k=1}^n$ and let $m\in\mathbb{N}$, $m\geq 1$. Assume that $T:E\to E$ is such that $A=M_{\beta}(T)$ is of the form $A=\lambda I+B$, where $\lambda\neq 0$ and $B$ is strictly upper triangular with nonzero entries in the first superdiagonal. Then the full list of $T$-invariant subspaces of $E$ is given by $V_0=\{0\}$ and $V_k=\mathbf{span}\{v_1,\cdots,v_k\}$, $k=1,2,\cdots,n$. Furthermore $T^m$ has the same invariant subspaces as $T$. \end{lemma}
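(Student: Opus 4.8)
The plan is to first analyze the $T$-invariant subspaces directly, and then deduce the coincidence of the invariant subspaces of $T$ and $T^m$. Since invariant subspaces are unchanged by shifting $T$ by a scalar multiple of the identity (the operators $T-\lambda I$ and $T$ have exactly the same invariant subspaces), I may replace $T$ by $N = T - \lambda I$, whose matrix in the basis $\beta$ is the strictly upper triangular $B$ with nonzero first superdiagonal. Thus $N v_k \in \mathbf{span}\{v_1,\dots,v_{k-1}\}$ for each $k$, and more precisely the $v_{k-1}$-component of $N v_k$ is nonzero. First I would verify that each $V_k = \mathbf{span}\{v_1,\dots,v_k\}$ is $N$-invariant: this is immediate from $N v_j \in \mathbf{span}\{v_1,\dots,v_{j-1}\} \subseteq V_k$ for $j \le k$. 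Conversely, suppose $W$ is a nonzero $N$-invariant subspace; I must show $W = V_k$ for some $k$.

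For the converse, let $k$ be the largest index such that some $w \in W$ has a nonzero $v_k$-coefficient; write $w = \sum_{j\le k} c_j v_j$ with $c_k \ne 0$. I claim $W = V_k$. The inclusion $W \subseteq V_k$ holds because, by maximality of $k$, every element of $W$ lies in $\mathbf{span}\{v_1,\dots,v_k\}$. For the reverse inclusion $V_k \subseteq W$, I would apply $N$ repeatedly to $w$: because $N$ lowers the "leading index" by exactly one (the coefficient of $v_{j-1}$ in $N v_j$ is nonzero), the vectors $w, Nw, N^2 w, \dots, N^{k-1} w$ have leading indices $k, k-1, \dots, 1$ respectively, hence are linearly independent and all lie in $W$. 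Since they are $k$ independent vectors inside the $k$-dimensional space $V_k$, they span it, so $V_k \subseteq W$. Together with $W \subseteq V_k$ this gives $W = V_k$. Adding the trivial subspace $V_0 = \{0\}$ completes the list, and one notes the $V_k$ form a complete flag, so there are no others.

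Finally, for the statement about $T^m$: I would observe that $N^m$ has exactly the same shape-analysis as $N$, in the sense that $N^m v_k \in \mathbf{span}\{v_1,\dots,v_{k-m}\}$ and, crucially, the coefficient of $v_{k-m}$ in $N^m v_k$ is nonzero (it is a product of $m$ nonzero superdiagonal-type entries, by tracking the leading term through each application of $N$). Hence the same two arguments apply verbatim with $N$ replaced by $N^m$: each $V_k$ is $N^m$-invariant, and any nonzero $N^m$-invariant $W$ equals some $V_k$ — for the spanning step one now uses $w, N^m w, N^{2m} w, \dots$, which still have leading indices decreasing (now by $m$ each step) but, combined with the fact that $W \subseteq V_k$ forces $\dim W \le k$, one argues slightly more carefully; alternatively, since $V_k$ is both $N$-invariant and the list of $N$-invariant subspaces is totally ordered, it suffices to note every $N^m$-invariant subspace is of the form $V_k$ because the $V_k$ already exhaust all candidates once we show the list of $N^m$-invariant subspaces has the same cardinality $n+1$. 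Since $T^m = (N+\lambda I)^m = \lambda^m I + (\text{strictly upper triangular with nonzero first superdiagonal})$ — here $\lambda^m \ne 0$ since $\lambda \ne 0$ — the operator $T^m$ is itself of the form covered by the first part of the lemma, so its invariant subspaces are exactly $V_0, V_1, \dots, V_n$, the same as those of $T$.

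The main obstacle I anticipate is the spanning step in the converse direction: making rigorous that repeatedly applying $N$ (or $N^m$) to a vector with leading index $k$ produces a linearly independent family with strictly decreasing leading indices. This rests entirely on the hypothesis that the first superdiagonal entries are nonzero, and the cleanest formalization is to track the "leading index" $\ell(v) = \max\{j : \text{the } v_j\text{-coefficient of } v \text{ is nonzero}\}$ and prove $\ell(Nv) = \ell(v) - 1$ whenever $\ell(v) \ge 1$. Once this index calculus is set up, both the $T$-case and the reduction $T^m \rightsquigarrow$ "same shape" case follow with no further difficulty.
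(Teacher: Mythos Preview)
Your proposal is correct and follows essentially the same route as the paper: you work with $N=T-\lambda I$, track how the leading index drops by exactly one under $N$ to pin down any invariant subspace as some $V_k$, and for $T^m$ you (after a brief detour through $N^m$, which you rightly abandon) observe that $T^m=\lambda^m I + C$ with $C$ strictly upper triangular and nonzero on the first superdiagonal, so the first part applies verbatim. The paper's proof is exactly this, making the superdiagonal claim explicit via the binomial expansion $A^m=\lambda^m I + m\lambda^{m-1}B + \cdots$, where only the $m\lambda^{m-1}B$ term contributes to the first superdiagonal.
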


\begin{proof} Assume that $A=M_{\beta}(T)$ is of the form $A=\lambda I+B$, where $\lambda\neq 0$ and $B$ is strictly upper triangular with nonzero entries in the first superdiagonal, and let $V\neq \{0\}$ be a $T$-invariant subspace.  Let $v\in V$, $v=a_1v_1+\cdots+a_sv_s$, $a_s\neq 0$. Then $w=Tv-\lambda v\in V$ and a simple computation shows that $w=\alpha_1v_1+\cdots+\alpha_{s-1}v_{s-1}$ with $\alpha_{s-1}=b_{s-1,s}a_s\neq 0$, where $B=(b_{ij})_{i,j=1}^n$. It follows that, if $V$ is $T$-invariant and  $v=a_1v_1+\cdots+a_sv_s\in V$ with $a_s\neq 0$, then $\mathbf{span}\{v_1,v_2,\cdots,v_s\}\subseteq V$. Take $k_0=\max\{k:\text{ exists }v\in V, v=a_1v_1+\cdots+a_sv_s\text{ and }a_s\neq 0\}$. Then $V=\mathbf{span}\{v_1,\cdots,v_{k_0}\}$. Finally, it is clear that all the spaces $V_k=\mathbf{span}\{v_1,\cdots,v_k\}$, $k=1,2,\cdots,n$ are $T$-invariant.

To compute the invariant subspaces of $T^m$ we take into account that $A^m=M_{\beta}(T^m)$ and 
\begin{eqnarray*}
A^m &=& (\lambda I+B)^m\\
&=& \sum_{k=0}^m\binom{m}{k}(-1)^{m-k}B^k\\
&=& \lambda^mI+m\lambda^{m-1}B+\sum_{k=2}^m\binom{m}{k}(-1)^{m-k}B^k.
\end{eqnarray*}
This shows that $A^m=\lambda^mI+C$, with $C$ strictly upper triangular  with nonzero entries in the first superdiagonal, since the only contribution  to the first superdiagonal  of  $C=m\lambda^{m-1}B+\sum_{k=2}^m\binom{m}{k}(-1)^{m-k}B^k$ is got from $m\lambda^{m-1}B$, and $\lambda\neq 0$ . This proves that we can apply the first part of the lemma to the linear transformation $T^m$, which concludes the proof. 
\end{proof}

\begin{remark} It is important to note that there are many examples of linear transformations $T:E\to E$ such that $T$ and $T^m$ have different sets of invariant subspaces. For example, if $T$ is not of the form $T=\lambda I$ for any scalar $\lambda$ and satisfies $T^m=I$ or $T^m=0$, then all subspaces of $E$ are invariant subspaces of $T^m$  and, on the other hand, there exists $v\in E$ such that $Tv\not\in \mathbf{span}\{v\}$, so that $\mathbf{span}\{v\}$ is not an invariant subspace of $T$.  
\end{remark}

\begin{proof}[Proof of Theorem \ref{main}] It follows from Theorem \ref{VdentroW} that $V$ is a subspace of 
$\mathcal{S}=\mathbf{span}\{t^k\}_{k=0}^{m(\lambda_0)-1}\oplus \bigoplus_{i=1}^s \mathbf{span}\{t^ke^{\lambda_it}\}_{k=0}^{m(\lambda_i)-1}$ for certain values of $\lambda_i$, $m(\lambda_i)$, and $s$ (recall that we imposed $\lambda_0=0$). Furthermore, we have already computed the matrix $A=M_{\beta}(\Delta_h)$ associated to $(\Delta_h)_{|\mathcal{S}}$ with respect to the basis 
$\beta=\{t^k\}_{k=0}^{m(\lambda_0)-1} \cup \bigcup_{i=1}^s \{t^ke^{\lambda_it}\}_{k=0}^{m(\lambda_i)-1}$, which is given by $A=\mathbf{diag}[A_0,A_1,\cdots,A_s]$, with $A_0$ satisfying \eqref{A0} and $A_i$ satisfying \eqref{Ai}, $i=1,2,\cdots,s$. It follows that $A^m=\mathbf{diag}[A_0^m,A_1^m,\cdots,A_s^m]$ is the matrix associated to $(\Delta_h^m)_{|\mathcal{S}}$ with respect to $\beta$. 

In particular, the eigenvalues of  $(\Delta_h^m)_{|\mathcal{S}}$ are given by 
$\{0,(e^{\lambda_ih}-1)^m,i=1,2,\cdots,s\}$. A direct computation shows that $$R_{0}((\Delta_h^m)_{|\mathcal{S}})=\Pi_{m(\lambda_0)-1}=\mathbf{span}\{t^k\}_{k=0}^{m(\lambda_0)-1}$$ and $$R_{(e^{\lambda_ih}-1)^m}((\Delta_h^m)_{|\mathcal{S}})=\mathbf{span}\{t^ke^{\lambda_it}\}_{k=0}^{m(\lambda_i)-1}.$$
Hence Theorem \ref{decomposition} shows that $V$ is $\Delta_h^m$-invariant if and only if $V=V_0\oplus V_1\oplus\cdots \oplus V_s$, where $V_0\subseteq \Pi_{m(\lambda_0)-1}$ and $V_i$ is a $\Delta_h^m$-invariant subspace of   $E_i=R_{(e^{\lambda_ih}-1)^m}((\Delta_h^m)_{|\mathcal{S}})$, $i=1,\cdots,s$. Thus, to find all $\Delta_h^m$-invariant subspaces of $\mathcal{S}$ (one of them being our space $V$) we only need to consider the invariant subspaces of  the spaces $\Pi_{m(0)-1}$ and $E_i$, $i=1,2,\cdots,s$. Now, $\beta_i=\{t^ke^{\lambda_it}\}_{k=0}^{m(\lambda_i)-1}$ is a basis of $E_i$ and  $A_i=M_{\beta_i}((\Delta_h)_{|E_i})$ is given by \eqref{Ai}, so that we can apply Lemma  \ref{nuevo} to $(\Delta_h)_{|E_i}$ and conclude that $V_i\subset E_i$ is $\Delta_h^m$-invariant if and only if it is  $\Delta_h$-invariant. This proves the theorem with $P=V_0$ and $E= V_1\oplus\cdots \oplus V_s$. 
\end{proof}

\begin{remark}
There are many examples of spaces $P\subseteq \Pi$ which are finite dimensional, $\Delta^m$-invariant and non translation invariant. A typical example is $P=\mathbf{span}\{1,t^m\}$. Obviously, $\Delta_h^m(P)=\Pi_0=\mathbf{span}\{1\}\subseteq P$. On the other hand, if $h\neq 0$,  $\Delta_ht^m=\sum_{k=0}^{m-1}\binom{m}{k}h^{m-k}t^k\not\in P$.  
\end{remark}

\section{$\Delta_{h_1h_2\cdots h_m}$-invariant subspaces}
In this section we consider the subspaces $V$ of $X$ which are $\Delta_{h_1h_2\cdots h_m}$-invariant for all $h_1,\cdots,h_m\in\mathbb{R}$. Our first result characterizes the property of $\Delta_{h_1h_2\cdots h_m}$-invariance for arbitrary subspaces of $X$. This result is an easy consequence of a well known theorem by D. Z. Djokovi\'{c} \cite{Dj} (see also \cite[Theorem 7.5, page 160]{HIR}, \cite[Theorem 15.1.2., page 418]{kuczma}), which states that  the operators $\Delta_{h_1 h_2\cdots h_s}$ satisfy the equation
\begin{equation*} 
\Delta_{h_1\cdots h_s}f(t)=
\sum_{\epsilon_1,\dots,\epsilon_s=0}^1(-1)^{\epsilon_1+\cdots+\epsilon_s}
\Delta_{\alpha_{(\epsilon_1,\dots,\epsilon_s)}(h_1,\cdots,h_s)}^sf(t+\beta_{(\epsilon_1,\dots,\epsilon_s)}(h_1,\cdots,h_s)),
\end{equation*}
where $$\alpha_{(\epsilon_1,\dots,\epsilon_s)}(h_1,\cdots,h_s)=(-1)\sum_{r=1}^s\frac{\epsilon_rh_r}{r}$$ and $$\beta_{(\epsilon_1,\dots,\epsilon_s)}(h_1,\cdots,h_s)=\sum_{r=1}^s\epsilon_rh_r.$$  
Later on (see Theorem \ref{main2}), we prove that if $V$ is a finite dimensional subspace of $X$ which is $\Delta^m_h$-invariant for all $h\in\mathbb{R}$ then $V$ is  $\Delta_{h_1h_2\cdots h_m}$-invariant for all $h_1,h_2,\cdots,h_m\in\mathbb{R}$.

\begin{theorem} \label{otro} Assume that $V$ is a subspace of $X$ which satisfies $\Delta_h^m(V)\subseteq V$ for all $h\in\mathbb{R}$. Then the following statements are equivalent: 
\begin{itemize}
\item[$(i)$] $\Delta^{m-1}_h\Delta_s(V)\subseteq V$ for all $h,s\in\mathbb{R}$.
\item[$(ii)$] $\Delta_{h_1h_2\cdots h_m}(V)\subseteq V$ for all $(h_1,h_2,\cdots,h_m)\in\mathbb{R}^m$. 
\end{itemize}
\end{theorem}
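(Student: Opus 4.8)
I would prove the two implications separately. The implication $(ii)\Rightarrow(i)$ is an immediate specialization: the finite difference operators commute, so $\Delta_{h_1h_2\cdots h_m}=\Delta_{h_1}\Delta_{h_2}\cdots\Delta_{h_m}$, and taking $h_1=\cdots=h_{m-1}=h$, $h_m=s$ in $(ii)$ gives $\Delta_h^{m-1}\Delta_s(V)=\Delta_{h\cdots h\,s}(V)\subseteq V$, which is $(i)$. (Note that $(i)$ already contains the standing hypothesis $\Delta_h^m(V)\subseteq V$, as the case $s=h$, and likewise for $(ii)$; so that hypothesis needs no separate use.)

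The substantive direction is $(i)\Rightarrow(ii)$, and the plan is to run Djoković's formula as an operator identity on $X$. For a fixed tuple $(h_1,\dots,h_m)$, write $\tau_c$ for translation by $c$ and rewrite the displayed formula of Djoković's theorem as
\[
\Delta_{h_1\cdots h_m}=\sum_{\epsilon\in\{0,1\}^m}(-1)^{\epsilon_1+\cdots+\epsilon_m}\,\tau_{\beta_\epsilon}\Delta_{\alpha_\epsilon}^m ,
\]
with $\alpha_\epsilon=\alpha_{(\epsilon_1,\dots,\epsilon_m)}(h_1,\dots,h_m)$, $\beta_\epsilon=\beta_{(\epsilon_1,\dots,\epsilon_m)}(h_1,\dots,h_m)$ as above (the summand being the operator $f\mapsto\Delta_{\alpha_\epsilon}^m f(\cdot+\beta_\epsilon)$); this is valid for distributions as well, both sides being the same finite combination of translations. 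Hence it is enough to check that each operator $\tau_\beta\Delta_\alpha^m$, $\alpha,\beta\in\mathbb{R}$, maps $V$ into $V$. This is precisely where hypothesis $(i)$ is used: since $\tau_\beta\Delta_\alpha=\tau_{\alpha+\beta}-\tau_\beta=\Delta_{\alpha+\beta}-\Delta_\beta$ and all operators involved commute,
\[
\tau_\beta\Delta_\alpha^m=(\tau_\beta\Delta_\alpha)\Delta_\alpha^{m-1}=\Delta_\alpha^{m-1}\Delta_{\alpha+\beta}-\Delta_\alpha^{m-1}\Delta_\beta ,
\]
and each of $\Delta_\alpha^{m-1}\Delta_{\alpha+\beta}$ and $\Delta_\alpha^{m-1}\Delta_\beta$ has the form $\Delta_h^{m-1}\Delta_s$, hence maps $V$ into $V$ by $(i)$. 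Therefore $\tau_\beta\Delta_\alpha^m(V)\subseteq V$, and summing over $\epsilon$ gives $\Delta_{h_1\cdots h_m}(V)\subseteq V$, i.e. $(ii)$.

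I do not anticipate a genuine obstacle: Djoković's formula supplies all the combinatorics, and the only idea one adds is that the extra translations $\tau_{\beta_\epsilon}$ it produces can be stripped off one factor $\Delta_\alpha$ at a time and absorbed into operators of exactly the shape $\Delta_h^{m-1}\Delta_s$ that $(i)$ controls. The single point requiring (routine) care is that Djoković's theorem is quoted in the paper pointwise for functions, whereas one uses it as an identity in the algebra generated by the translations $\tau_c$; this transfer is immediate and applies verbatim to distributions.
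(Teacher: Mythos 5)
Your proof is correct, and although it rests on the same two ingredients as the paper's --- Djokovi\'{c}'s formula together with the elementary identity $\Delta_\alpha\Delta_\beta=\Delta_{\alpha+\beta}-\Delta_\alpha-\Delta_\beta$ (equivalently, your $\tau_\beta\Delta_\alpha=\Delta_{\alpha+\beta}-\Delta_\beta$) --- it organizes them differently, and in fact more soundly. The paper first asserts that Djokovi\'{c}'s theorem exhibits the $(m-1)$-fold operator $\Delta_{h_1\cdots h_{m-1}}$ as a linear combination of operators of the form $\Delta_h^{m-1}\Delta_s$, deduces from $(i)$ that $V$ is invariant under all $(m-1)$-fold mixed differences, and only then climbs back up to level $m$ via $\Delta_{h_1h_2}=\Delta_{h_1+h_2}-\Delta_{h_1}-\Delta_{h_2}$. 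That intermediate assertion cannot be taken literally: every operator $\Delta_h^{m-1}\Delta_s$ is a product of $m$ first differences and therefore annihilates $\Pi_{m-1}$, whereas $\Delta_{h_1\cdots h_{m-1}}t^{m-1}=(m-1)!\,h_1\cdots h_{m-1}\neq 0$ for nonzero steps, so no such linear combination exists. Your route --- apply Djokovi\'{c} at level $m$ and strip the translations via $\tau_\beta\Delta_\alpha^m=\Delta_\alpha^{m-1}\Delta_{\alpha+\beta}-\Delta_\alpha^{m-1}\Delta_\beta$ --- shows directly that $\Delta_{h_1\cdots h_m}$ is a linear combination of operators of exactly the shape that $(i)$ controls; this both repairs the gap and makes the paper's second reduction step superfluous. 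Your handling of the trivial direction and your remark that the identity is one between finite linear combinations of translation operators, hence valid on distributions, are also fine.
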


\begin{proof} We  only prove $(i)\Rightarrow (ii)$ since the other implication is trivial. Now, Djokovi\'{c}'s Theorem implies that the operator 
$\Delta_{h_1h_2\cdots h_{m-1}}$ is a linear combination of operators of the form $\Delta_h^{m-1}\Delta_s$ (with $h,s\in\mathbb{R}$). Hence $(i)$ implies that 
$\Delta_{h_1h_2\cdots h_{m-1}}(V)\subseteq V$, for all $(h_1,h_2,\cdots,h_{m-1})\in\mathbb{R}^{m-1}$. On the other hand, the identity $\Delta_{h_1h_2}=\Delta_{h_1+h_2}-\Delta_{h_1}-\Delta_{h_2}$ (which is easy to check) implies that, if $(h_1,h_2,\cdots,h_{m})\in\mathbb{R}^{m}$, then
\[
\Delta_{h_1h_2\cdots h_m}=\Delta_{h_1h_2}\Delta_{h_3\cdots h_m}=\Delta_{(h_1+h_2)h_3\cdots h_m}-\Delta_{h_1h_3\cdots h_m}-\Delta_{h_2h_3\cdots h_m}
\]
and $V$ is invariant by all operators appearing in the last member of the identity above. 
\end{proof}
\begin{theorem} \label{main2} Assume that $V$ is a finite dimensional subspace of $X$. Then the following statements are equivalent:
\begin{itemize}
\item[$(i)$]  $\Delta_h^m(V)\subseteq V$ for all $h\in\mathbb{R}$. 
\item[$(ii)$] $\Delta_{h_1h_2\cdots h_m}(V)\subseteq V$  for all $(h_1,h_2,\cdots,h_m)\in\mathbb{R}^m$. 
\end{itemize}
\end{theorem}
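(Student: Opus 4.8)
The plan is to prove Theorem~\ref{main2} by establishing both implications, with the genuinely new content being $(i)\Rightarrow(ii)$ since $(ii)\Rightarrow(i)$ is immediate from the specialization $h_1=\cdots=h_m=h$. For the forward direction, I would combine Theorem~\ref{otro} with the structural description of $\Delta_h^m$-invariant subspaces provided by Theorem~\ref{main}. By Theorem~\ref{otro}, it suffices to check that $(i)$ forces $\Delta_h^{m-1}\Delta_s(V)\subseteq V$ for all $h,s\in\mathbb{R}$; then Djokovi\'{c}'s identity and the elementary recursion $\Delta_{h_1h_2}=\Delta_{h_1+h_2}-\Delta_{h_1}-\Delta_{h_2}$ upgrade this to full $\Delta_{h_1\cdots h_m}$-invariance. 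So the whole problem reduces to: \emph{given a finite dimensional $V$ with $\Delta_h^m(V)\subseteq V$ for all $h$, show $\Delta_h^{m-1}\Delta_s(V)\subseteq V$.}

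To attack that reduction, I would invoke Theorem~\ref{main} to write $V=P\oplus E$ with $P\subseteq\Pi=\mathbb{C}[t]$ and $E$ translation invariant. On $E$ there is nothing to prove: since $E$ is $\tau_s$-invariant for every $s$, it is $\Delta_s$-invariant and $\Delta_h$-invariant for all $h,s$, hence trivially $\Delta_h^{m-1}\Delta_s(E)\subseteq E\subseteq V$. The real work is on the polynomial summand $P$. Here I would argue degree-theoretically: if $p\in P$ has degree $d$, then $\Delta_s p$ has degree $d-1$, and $\Delta_h^{m-1}\Delta_s p$ has degree $d-m$ (or is zero). The key observation is that $P$, being $\Delta_h^m$-invariant for \emph{all} $h$, must contain, together with any $p$ of degree $d\ge m$, the polynomial $\Delta_h^m p$ for every $h$; varying $h$ and extracting coefficients of powers of $h$ shows $P$ contains a rich family of lower-degree polynomials. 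Concretely, $\Delta_h^m p$ is a polynomial in $h$ whose coefficients are (up to nonzero constants) the iterated derivatives $p^{(m)}, p^{(m+1)}, \dots$ evaluated appropriately, so $\Delta_h^m(P)\subseteq V$ for all $h$ forces $p^{(m)},p^{(m+1)},\dots\in P$ whenever $p\in P$. From this one deduces that $P$ contains $\Delta_h^{m-1}\Delta_s p$ as well, because that polynomial has degree $\le d-m$ and can be written as an explicit $\mathbb{C}$-linear combination of $p^{(m)}, p^{(m+1)}, \dots$ — more precisely, $\Delta_h^{m-1}\Delta_s$ acting on polynomials is a polynomial in the single operator $D=d/dt$ with no constant or low-order terms, specifically divisible by $D^m$, and the span of $\{D^m p, D^{m+1}p,\dots\}$ is contained in $P$.

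The step I expect to be the main obstacle is making precise and rigorous the claim that $\Delta_h^m(P)\subseteq V$ for all $h$ implies $D^m p\in P$ (and hence all higher derivatives of $p$ lie in $P$) — the passage from "invariant under the operator for every parameter value" to "invariant under a limiting/coefficient-extracted differential operator." The clean way to handle this in the finite-dimensional polynomial setting is a Vandermonde/finite-difference-in-$h$ argument: pick $m+1$ (or sufficiently many) distinct values $h_0,h_1,\dots$ of $h$, note each $\Delta_{h_j}^m p\in P$, and solve the resulting nonsingular linear system for the "coefficient polynomials" of $h\mapsto \Delta_h^m p$, each of which is a nonzero scalar multiple of some $p^{(k)}$ with $k\ge m$. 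Once $D^m p,\dots,D^{\deg p}p$ are all known to be in $P$, the rest is a bookkeeping exercise: expand $\Delta_h^{m-1}\Delta_s$ as a polynomial in $D$ with lowest term of order $m$, conclude $\Delta_h^{m-1}\Delta_s(P)\subseteq P\subseteq V$, combine with the trivial statement on $E$, and close the argument via Theorem~\ref{otro}.
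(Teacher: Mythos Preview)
Your proposal is correct and follows essentially the same strategy as the paper: reduce via Theorem~\ref{main} to the polynomial summand $P$, then use a Vandermonde-type argument in the parameter $h$ to show that $P$ contains all of $\Pi_{N-m}$ (equivalently, $p^{(k)}\in P$ for every $k\ge m$). The paper finishes a bit more directly---once $\Pi_{N-m}\subseteq P$ it simply observes $\Delta_{h_1\cdots h_m}(P)\subseteq\Delta_{h_1\cdots h_m}(\Pi_N)=\Pi_{N-m}\subseteq P$, bypassing your detour through Theorem~\ref{otro}---but the substance of the argument is the same.
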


\begin{proof}
We only prove $(i)\Rightarrow (ii)$ since the other implication is trivial. It follows from Theorem \ref{main} that $V=P\oplus E$ with $E$ translation invariant and $P$ a subspace of $\Pi$. Thus we only need to prove that $\Delta_{h_1h_2\cdots h_m}(P)\subseteq P$, for all $(h_1,h_2,\cdots,h_m)\in\mathbb{R}^m$. Take $N=\min\{n:P\subseteq \Pi_n\}$ and let $A_m(h)=M_{\beta}((\Delta_h^m)_{|\Pi_N})$ be the matrix associated to $(\Delta_h^m)_{|\Pi_N}:\Pi_N\to\Pi_N$ with respect to the natural basis $\beta=\{1,t,t^2,\cdots,t^N\}$. This matrix can be computed explicitly just taking into account the following well known formulas:
\begin{equation}
\Delta_h^mf(t)=\sum_{k=0}^m\binom{m}{k}(-1)^{m-k}f(t+kh)
\end{equation}  
(see, e.g., \cite[Corollary 15.1.2, page 418]{kuczma}) and
\begin{equation}
\sum_{k=0}^m\binom{m}{k}(-1)^kk^r=(-1)^m \left \{
\begin{array}{cccccc} r \\ m \end{array}\right\}
m!,
\end{equation}
where $\left \{ \begin{array}{cccccc} r \\ m \end{array}\right\}$ denote the Stirling numbers  of the second kind (see, e.g. \cite[Identity 18, page 3136]{spivey}). Recall that these numbers satisfy the recurrence relations
\begin{eqnarray*}
\left \{
\begin{array}{cccccc} n+1 \\ k \end{array}\right\} = k\left \{
\begin{array}{cccccc} n \\ k \end{array}\right\} + \left \{
\begin{array}{cccccc} rn\\ k-1 \end{array}\right\} \\
\left \{
\begin{array}{cccccc} 0 \\ 0 \end{array}\right\} = 1,\ \ 
\left \{
\begin{array}{cccccc} n \\ 0 \end{array}\right\} = \left \{
\begin{array}{cccccc} 0\\ n \end{array}\right\} = 0 
\end{eqnarray*}
and $\left \{
\begin{array}{cccccc} n \\ k \end{array}\right\}=0$ if $n<k$. 
Now, given $s\in\{0,1,\cdots,N\}$ we have that 
\begin{eqnarray*}
\Delta_h^mt^s &=& \sum_{k=0}^m\binom{m}{k}(-1)^{m-k}(t+kh)^s\\
&=& \sum_{k=0}^m\left[\binom{m}{k}(-1)^{m-k}\left( \sum_{j=0}^s \binom{s}{j}(kh)^{s-j}t^j\right)\right] \\
&=& \sum_{j=0}^s \left[\binom{s}{j}\left(\sum_{k=0}^m\binom{m}{k}(-1)^{m-k}(kh)^{s-j}\right)t^j \right]\\
&=& \sum_{j=0}^s \left[\binom{s}{j}(-1)^m\left(\sum_{k=0}^m\binom{m}{k}(-1)^{k}(k)^{s-j}\right)h^{s-j}t^j \right]\\
&=& \sum_{j=0}^s \left[\binom{s}{j}(-1)^m\left((-1)^m \left \{
\begin{array}{cccccc} s-j \\ m \end{array}\right\}
m!\right)h^{s-j}t^j \right]\\
&=& \sum_{j=0}^s \binom{s}{j} \left \{
\begin{array}{cccccc} s-j \\ m \end{array}\right\}
m! h^{s-j}t^j .
\end{eqnarray*}
It follows that $A_m(h)=(a_{ij}(h))_{i,j=0}^N$ is given by 
\[
a_{ij}(h)= \left\{\begin{array}{cccccc}   \left(\begin{array}{cccccc}  j \\ i \end{array}\right) \left \{
\begin{array}{cccccc} j-i \\ m \end{array}\right\}
m! h^{j-i} & & \text{ for } i=0,1,\cdots,j \text{ and } j=m,\cdots,N \\
0 & & \text{otherwise}
\end{array}\right. 
\]
Let $p(t)=b_0+b_1t+\cdots+b_Nt^N\in P$ be a polynomial with $b_N\neq 0$. Then the coordinates of $\Delta_h^mp(t)$ with respect to the basis $\beta$ are given by 
\[
A_m(h) \left [ \begin{array}{cccccc} 
b_0 \\ b_1\\ \vdots \\b_N \end{array}\right]= \left [ \begin{array}{cccccc} 
f_0(h) \\ f_1(h)\\ \vdots \\  f_{N-m}(h)\\ 0 \\ \vdots \\ 0\end{array}\right],
\]
where $f_i(h)=\sum_{j=m}^Na_{ij}(h)b_j= \sum_{j=m+i}^N \left(\begin{array}{cccccc}  j \\ i \end{array}\right) \left \{
\begin{array}{cccccc} j-i \\ m \end{array}\right\}
m! h^{j-i} b_j$ is a polynomial (in the variable $h$) satisfying $\deg f_i(h)=N-i$, $i=0,1,\cdots,N-m$, since $b_N\neq 0$. This proves that the functions 
$\{f_0(h),\cdots,f_{N-m}(h)\}$ form a linearly independent set and, as a consequence, there exists numbers $\{h_i\}_{i=0}^{N-m}$ such that  the vectors $v_i=[f_0(h_i),\cdots,f_{N-m}(h_i)]^T$, 
$i=0,1,\cdots,N-m$,  are such that the matrix 
$H=\mathbf{col}[v_0,\cdots,v_{N-m}]$ is invertible. In particular, these vectors form a basis of $\mathbb{C}^{N-m+1}$. This proves that $\Pi_{N-m}\subseteq P$ since $\Delta_h^mp(t)\in P$ for all $h\in\mathbb{R}$. It follows that  
\[
\Delta_{h_1h_2\cdots h_m} P\subseteq \Pi_{N-m} \subseteq P.
\]
for all $h_1,h_2,\cdots,h_m\in\mathbb{R}$, since $P\subseteq \Pi_N$ and $\Delta_{h_1h_2\cdots h_m} \Pi_N = \Pi_{N-m} $.
\end{proof} 

\section{The case of real valued distributions}
Let $Z$ denote either the real vector space of continuous functions $f:\mathbb{R}\to \mathbb{R}$ or the space of real valued Schwartz distributions. The results we have demonstrated in the previous sections of  this paper can be stated, with appropriate modifications, for subspaces of $Z$. Indeed, the real valued results are a direct consequence of the theorems we have already proved and the use of complexi\-fi\-cation, which is a standard tool in Linear Algebra and Functional Analysis. Concretely, given $V\subset Z$ a vector subspace of the real vector space $Z$, we define its complexification $V^{\mathbb{C}}$ as the vector subspace of $X$ defined by $V^{\mathbb{C}}=V+iV$. It is evident that 
\[
\Delta_h^m(V^{\mathbb{C}})=(\Delta_h^m(V))^{\mathbb{C}},
\] 
so that $V$ is a $\Delta_h^m$-invariant subspace of $Z$ if and only if $V^{\mathbb{C}}$ is a $\Delta_h^m$-invariant subspace of $X$. Let us consider separately the cases $m=1$ and $m>1$.

\noindent \textbf{Case $m=1$:}  Assume that $V$ is a finite dimensional subspace of $Z$ and $\Delta_h(V)\subseteq V$. Then $V^{\mathbb{C}}=V+iV$ is a translation invariant subspace of $X$, so that it admits, when considered as a complex vector space, a basis of the form
\[
\beta = \{
 t^{k-1}e^{\lambda_i t}, \ \  k=1,\cdots,m(\lambda_i) \text{ and } i=0,1,2,\cdots,s\}. 
\]
This obviously implies that $V$ admits, as a real vector space, a basis of the form 
\[
\gamma = \{
 t^{k-1}\}_{k=1}^{m_0} \cup \bigcup_{i=1}^s\bigcup_{k=1}^{m(\lambda_i)}\{t^{k-1}e^{\mathbf{Re}(\lambda_i) t}\cos(\mathbf{Im}(\lambda_i)t),t^{k-1}e^{\mathbf{Re}(\lambda_i) t}\sin(\mathbf{Im}(\lambda_i)t)\}. 
\]
\noindent\textbf{Case $m>1$:} If $V$ is a finite dimensional subspace of $Z$ and $\Delta_h^m(V)\subseteq V$. Then $V^{\mathbb{C}}=V+iV$ is a $\Delta_h^m$-invariant subspace of $X$, so that it admits a decomposition $V^{\mathbb{C}}=P+E$ with $P,E$ finite dimensional subspaces of $X$, $P$ being a $\Delta_h^m$-invariant subspace of $\Pi$ and $E$ a translation invariant subspace of $X$.  Taking real parts, we obtain that $V=F+G$ with $F$ a $\Delta_h^m$-invariant finite dimensional subspace of $\mathbb{R}[t]$ and $G$ a translation invariant finite dimensional subspace of $Z$.

Finally, we would like to comment that both Montel's Theorem (i.e., our Corollary \ref{monteltheorem}) and Theorems \ref{otro}, \ref{main2} are also true -with no changes- for the real case.

\noindent\textbf{Acknowledgement}\textit{.} I would like to express my warmest gratitude to the anonymous referee of this paper since his many clever and generous ideas have helped to write a much more better manuscript. Not only his remarks helped with the readability of the paper but also to simplify some proofs and to include some new interesting ideas. Because of all this: thanks.

\smallskip\noindent 
Received: 2013, May

\medskip 
 
\noindent Departamento de Matem\'{a}ticas \\Universidad de Ja\'{e}n \\ 
E-mail: jmalmira@ujaen.es  

\medskip

\end{document}